\newtheorem{thm}{Theorem}[section]
\theoremstyle{definition}
\newtheorem{cor}[thm]{Corollary}
\newtheorem{prop}[thm]{Proposition}
\newtheorem{note}[thm]{Notation and Remark}
\newtheorem{rem}[thm]{Remark}
\newtheorem{ex}[thm]{Example}
\numberwithin{equation}{section}
\begin{document}
\title[Finitely generated coreduced comultiplication modules]
{Finitely generated coreduced comultiplication modules}
\author{F.  Farshadifar}
\address{Department of Mathematics, Farhangian University, Tehran, Iran.}
\email{f.farshadifar@cfu.ac.ir}

\subjclass[2010]{13C13, 13C99.}%
\keywords {finitely generated module, comultiplication module, maximal second submodule, coreduced module.}

% ----------------------------------------------------------------
\begin{abstract}
This paper deals with some results concerning finitely generated coreduced comultiplication modules over a commutative ring.
\end{abstract}
\maketitle

\section{Introduction}
\noindent
Throughout this paper, $R$ will denote a commutative ring with
identity. Let $M$ be an $R$-module.
$M$ is called a \emph{reduced module} if $rm = 0$ implies that $rM \cap Rm = 0$, where $r\in R$ and $m \in  M$ \cite{MR2050725}.
A proper submodule $N$ of $M$ is said to be \emph{completely irreducible} if $N=\bigcap _
{i \in I}N_i$, where $ \{ N_i \}_{i \in I}$ is a family of
submodules of $M$, implies that $N=N_i$ for some $i \in I$. Every submodule of $M$ is an intersection of completely irreducible submodules of $M$. Thus the intersection
of all completely irreducible submodules of $M$ is zero \cite{FHo06}.
$M$ is said to be \emph{coreduced module} if $(L:_Mr)=M$ implies that $L+(0:_Mr)=M$, where $r \in R$ and $L$ is a completely irreducible submodule of $M$ \cite{MR3755273}. $M$ is said to be a \emph{comultiplication module} if for every submodule $N$ of $M$ there exists an ideal $I$ of $R$ such that $N=(0:_MI)$ \cite{MR3934877}.

Let $M$ be an $R$-module. A non-zero submodule $N$ of $M$ is said to be \emph{second} if for each $a \in R$, the homomorphism $ N \stackrel {a} \rightarrow N$ is either surjective or zero \cite{MR1879449}.  A second submodule $N$ of $M$ is said to be a \emph{maximal second submodule} of $M$, if there does not exist a second submodule $L$ of $M$ such that $N \subset L \subset M$ \cite{MR3073398}.
The set of all maximal second submodules of $M$ will be denoted by $Max^s(M)$. The sum of all maximal second submodules of $M$ contained in a submodule $K$ of $M$ is denote by $\mathfrak{S}_K$.  If $N$ is a submodule of $M$, define
 $V^s(N) = \{S \in Max^s(M) : S \subseteq N\}$.

The purpose of this paper is to obtain some results about finitely generated coreduced comultiplication $R$-modules. In Section 3 of this paper, among the other results, it is shown that if $M$ is a finitely generated faithful coreduced comultiplication $R$-module, then
\begin{itemize}
\item for each $a \in R$,  we have $V^s((0:_MAnn_R(a))=Max^s(M) \setminus V^s((0:_Ma))$ (see Theorem \ref{tt1.11}).
\item $(0:_M\mathfrak{P}_I)=\mathfrak{S}_{(0:_MI)}$ for each ideal $I$ of $R$ (see Theorem \ref{t119.3}).
\item $\mathfrak{S}_{IM}= sec(IM)=IM$ for each ideal $I$ of $R$ (see Theorem \ref{tt0.1}).
\item for each $a \in R$, $Ann_R(aM)M=\mathfrak{S}_{(0:_Ma)}$  (see Corollary \ref{c1.13}).
\item a submodule $S$ of $M$ is a maximal second submodule of $M$
if and only if $S=I^M_{Ann_R(S)}(M)$ (see Corollary \ref{c00.3}).
\end{itemize}

%%%%%%%%%%%%%%%%%%%%%%%%%
%%%%%%%%%%%%%%%%%%%%%%%%%%%%%%%%%
%%%%%%%%%%%%%%%%%%%%%%%%%%%%%%%%%%%%%%%%
%%%%%%%%%%%%%%%%%%%%%%%%%%%%%%%%%%%%%%%%%%%%%%%%
\section{Main results}
One may think that if $S$ is a maximal second submodule of an $R$-module $M$, then $Ann_R(S)$ is a prime ideal minimal over
$Ann_R(M)$. In \cite[Example 5.10]{MR3266519}, it is shown that this is not true in general.
\begin{prop}\label{l0.1}
Let $M$ be a finitely generated comultiplication $R$-module. Then we have the following.
\begin{itemize}
\item [(a)] If $\mathfrak{p}$ is a prime ideal of $R$ and $N$ is a submodule of $M$ with $Ann_R(N) \subseteq \mathfrak{p}$, then $\mathfrak{p}=Ann_R((0:_N\mathfrak{p}))$.
\item [(b)] If $\{\mathfrak{p_i}\}_{i \in I}$ is a set of prime ideals of $R$ with $Ann_R(M) \subseteq \mathfrak{p_i}$, then
$$
(0:_M\cap_{i \in I}\mathfrak{p_i})=\sum_{i \in I}(0:_M\mathfrak{p_i}).
$$
\item [(c)] If $S$ is a maximal second submodule of $N$, then $Ann_R(S)$ is a prime ideal minimal over
$Ann_R(N)$.
\item [(d)] If $S$ is a submodule of $M$ such that $Ann_R(S)$ is a prime ideal minimal over
$Ann_R(M)$, then  $S$ is a maximal second submodule of $M$.
\item [(e)] If $M$ is a coreduced $R$-module, $S$ a maximal second submodule of $M$, and $a \in Ann_R(S)$, then
$$
Ann_{R/Ann_R(M)}(a+Ann_R(M)) \not \subseteq Ann_R(S)/Ann_R(M).
$$
\end{itemize}
\end{prop}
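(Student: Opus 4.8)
The plan is to reformulate the conclusion as the existence of an element outside $\mathfrak{p} := Ann_R(S)$ that annihilates $aM$, and then to produce such an element by combining the coreducedness of $M$ with the fact that $\mathfrak{p}$ is a minimal prime. First I would record that, by part (c), $\mathfrak{p} = Ann_R(S)$ is a prime ideal minimal over $Ann_R(M)$, and that $a \in \mathfrak{p}$ by hypothesis. Unwinding the definition of the annihilator in the quotient ring, the desired conclusion $Ann_{R/Ann_R(M)}(a+Ann_R(M)) \not\subseteq \mathfrak{p}/Ann_R(M)$ is equivalent to finding $t \in R \setminus \mathfrak{p}$ with $ta \in Ann_R(M)$, i.e. $taM = 0$; equivalently, to showing $Ann_R(aM) \not\subseteq \mathfrak{p}$.

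The key step, and the main obstacle, is to extract from coreducedness the global identity $aM + (0:_M a) = M$. Here I would first prove the auxiliary fact that if $K$ and $N$ are submodules of $M$ with $L + N = M$ for every completely irreducible submodule $L \supseteq K$, then $K + N = M$: indeed, if $K + N \neq M$ then $K+N$, being a proper submodule, is contained in some completely irreducible submodule $L_0$ (the family of completely irreducible submodules containing a proper submodule is nonempty, since otherwise its intersection would be $M$); applying the hypothesis to $L_0 \supseteq K$ gives $L_0 + N = M$, which contradicts $N \subseteq K+N \subseteq L_0$ together with $L_0 \neq M$ (the latter forcing $L_0 + N = L_0 \neq M$). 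Taking $K = aM$ and $N = (0:_M a)$ and feeding in the defining property of a coreduced module (for every completely irreducible $L$ with $(L:_M a) = M$, i.e. $aM \subseteq L$, one has $L + (0:_M a) = M$) then yields $aM + (0:_M a) = M$. Multiplying this equality by $a$ and using $a(0:_M a) = 0$ gives $aM = a^2M$, and hence $aM = a^nM$ for every $n \geq 1$.

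It then remains to exploit minimality of $\mathfrak{p}$. Since $\mathfrak{p}$ is minimal over $Ann_R(M)$ and $a \in \mathfrak{p}$, the image of $a$ is nilpotent in the localization $(R/Ann_R(M))_{\mathfrak{p}}$, whose unique prime is its nilradical; hence there exist $t \in R \setminus \mathfrak{p}$ and $n \geq 1$ with $ta^n \in Ann_R(M)$, that is $t a^n M = 0$. Now the collapse $a^nM = aM$ from the previous paragraph converts this into $t\,aM = 0$, so that $ta \in Ann_R(M)$ while $t \notin \mathfrak{p}$. This exhibits $t + Ann_R(M)$ as an element of $Ann_{R/Ann_R(M)}(a+Ann_R(M))$ lying outside $\mathfrak{p}/Ann_R(M)$, which is exactly the required non-containment.

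I expect the two genuinely substantive points to be (i) the passage from the one-completely-irreducible-submodule-at-a-time coreduced hypothesis to the global identity $aM + (0:_M a) = M$, and (ii) the observation that this identity forces $aM = a^nM$, which is precisely what lets the nilpotence of $a$ modulo $Ann_R(M)$ be pushed down to the first power. The minimal-prime and localization input is standard commutative algebra, and the comultiplication hypothesis enters only indirectly, through the citation of part (c).
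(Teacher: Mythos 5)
Your proposal addresses only part (e): parts (a)--(d) are nowhere proven, and indeed part (c) is invoked as an unproven input to your argument. As a proof of the full proposition it is therefore incomplete; the paper proves (a)--(d) by separate arguments (for instance, (a) rests on the identity $\mathfrak{p}=Ann_R((0:_M\mathfrak{p}))$ for primes containing $Ann_R(M)$, and (b) on a sum--intersection formula for comultiplication modules), none of which your text supplies.

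For part (e) itself, your argument is correct, and it takes a genuinely different route from the paper's. The paper quotes two external results: that a coreduced comultiplication module has $R/Ann_R(M)$ reduced \cite[Proposition 2.15]{MR3755273}, and Matlis's characterization of minimal primes of a reduced ring \cite[Proposition 1.2 (1)]{MR698302}, which together give the non-containment immediately. You instead work at the module level. Your auxiliary lemma is sound: since every proper submodule is an intersection of (necessarily proper) completely irreducible submodules, the hypothesis that $L+N=M$ for every completely irreducible $L\supseteq K$ does globalize to $K+N=M$; applied with $K=aM$ and $N=(0:_Ma)$, coreducedness yields $aM+(0:_Ma)=M$, hence $aM=a^2M=a^nM$. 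Combining this with the standard fact that an element $a$ of a prime $\mathfrak{p}$ minimal over $Ann_R(M)$ satisfies $ta^nM=0$ for some $t\notin\mathfrak{p}$ collapses, via $a^nM=aM$, to $taM=0$, which is exactly the required witness. What this buys is self-containedness: both citations are replaced by elementary arguments, at the cost of re-deriving in the needed special case what the cited results encapsulate. Like the paper's proof, your route uses the finitely generated comultiplication hypotheses only through part (c), so the logical structure is parallel even though the mechanism is different.
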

\begin{proof}
(a) Let $\mathfrak{p}$ be a prime ideal of $R$ and $N$ be a submodule of $M$ with $Ann_R(N) \subseteq \mathfrak{p}$. Then $Ann_R(M) \subseteq \mathfrak{p}$. Thus by the proof of \cite[Lemma 2.10]{MR3091647}, 
$\mathfrak{p}=Ann_R((0:_M\mathfrak{p}))$. Now we have
$$
Ann_R((0:_N\mathfrak{p}))=Ann_R((0:_M\mathfrak{p})\cap N)=Ann_R((0:_M\mathfrak{p}+Ann_R(N)))
$$
$$
=Ann_R((0:_M\mathfrak{p}))=\mathfrak{p}.
$$

(b) This follows from part (a) and \cite[Proposition 12]{MR3934877}.

(c)
Let $\mathfrak{p}$ be a prime ideal of $R$ such that $Ann_R(N)\subseteq \mathfrak{p} \subseteq Ann_R(S)$. Then $S \subseteq (0:_N\mathfrak{p})$.  By \cite[Lemma 2.10]{MR3091647}, $(0:_N\mathfrak{p})$ is a second submodule of $M$. Thus $S=(0:_N\mathfrak{p})$. This implies that
$Ann_R(S)=\mathfrak{p}$ by part (a).

(d) Since $Ann_R(S)$ is a prime ideal of $R$, we have $S$ is a second submodule of $M$ by \cite[Theorem 196]{MR3934877}.
Now assume that $S_1$ be a miximal second submodule of $M$ such that $S\subseteq S_1$. Then by part (a), $Ann_R(S_1)$ is a prime ideal minimal over $Ann_R(M)$ and $Ann_R(S_1) \subseteq Ann_R(S)$. Thus as $Ann_R(S)$ is a prime ideal minimal over
$Ann_R(M)$, we have $Ann_R(S_1) = Ann_R(S)$. It follows that $S_1=S$, as needed.

(e) By part (c),  $Ann_R(S)$ is a minimal prime ideal of $R$ over $Ann_R(M)$.
As $M$ is a coreduced $R$-module, $R/Ann_R(M)$ is a reduced ring by \cite[Proposition 2.15]{MR3755273}.
Let  $a+Ann_R(M) \in Ann_R(S)/Ann_R(M)$.
Then by \cite[Proposition 1.2 (1)]{MR698302}, we have $Ann_{R/Ann_R(M)}(a+Ann_R(M)) \not \subseteq  Ann_R(S)/Ann_R(M)$.
\end{proof}

\begin{thm}\label{tt1.11}
Let $M$ be a faithful finitely generated coreduced comultiplication $R$-module. Then for each $a \in R$,  we have $V^s((0:_MAnn_R(a))=Max^s(M) \setminus V^s((0:_Ma))$.
\end{thm}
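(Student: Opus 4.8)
The plan is to prove the set equality elementwise. I fix $a \in R$ and show that for every $S \in Max^s(M)$ one has $S \in V^s((0:_M Ann_R(a)))$ if and only if $S \notin V^s((0:_M a))$. Since both sets $V^s((0:_M Ann_R(a)))$ and $V^s((0:_M a))$ are by definition subsets of $Max^s(M)$, this elementwise equivalence is exactly what is needed to identify $V^s((0:_M Ann_R(a)))$ with the complement $Max^s(M) \setminus V^s((0:_M a))$.

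The first step is to translate the two membership conditions into statements about the ideal $\mathfrak{p} := Ann_R(S)$. Since $(0:_M I) = \{m \in M : Im = 0\}$, the containment $S \subseteq (0:_M Ann_R(a))$ holds if and only if $Ann_R(a)\cdot S = 0$, that is, $Ann_R(a) \subseteq Ann_R(S) = \mathfrak{p}$; likewise $S \subseteq (0:_M a)$ holds if and only if $aS = 0$, that is, $a \in \mathfrak{p}$. Thus, for a fixed maximal second submodule $S$, the theorem reduces to the purely ring-theoretic equivalence
$$
Ann_R(a) \subseteq \mathfrak{p} \iff a \notin \mathfrak{p}.
$$

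To invoke this equivalence I would record two structural facts about $\mathfrak{p}$. Because $M$ is faithful, $Ann_R(M) = 0$, so Proposition \ref{l0.1}(c) applied with $N = M$ shows that $\mathfrak{p} = Ann_R(S)$ is a prime ideal minimal over $Ann_R(M) = 0$, i.e. a minimal prime of $R$. Because $M$ is coreduced and faithful, $R = R/Ann_R(M)$ is a reduced ring by \cite[Proposition 2.15]{MR3755273}. With $\mathfrak{p}$ a minimal prime of a reduced ring, one implication is elementary and uses only primeness: if $a \notin \mathfrak{p}$ and $r \in Ann_R(a)$, then $ra = 0 \in \mathfrak{p}$ forces $r \in \mathfrak{p}$, so $Ann_R(a) \subseteq \mathfrak{p}$. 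The converse implication, namely $a \in \mathfrak{p} \Rightarrow Ann_R(a) \not\subseteq \mathfrak{p}$, is precisely the minimal-prime characterization in reduced rings recorded in \cite[Proposition 1.2 (1)]{MR698302}, which is already the engine behind Proposition \ref{l0.1}(e).

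The one genuinely nontrivial step is this last converse, and it is where both hypotheses do real work: the implication fails for non-minimal primes and for non-reduced rings, so the argument must use both that faithfulness forces $\mathfrak{p}$ to be a minimal prime and that coreducedness forces $R$ to be reduced. Everything else — the two annihilator translations and the passage from the elementwise equivalence to the stated equality of $V^s$-sets — is routine bookkeeping. Accordingly I would present the displayed ring-theoretic equivalence as the core of the proof and then read off, for each $S \in Max^s(M)$, the chain $S \in V^s((0:_M Ann_R(a))) \iff Ann_R(a) \subseteq \mathfrak{p} \iff a \notin \mathfrak{p} \iff S \notin V^s((0:_M a))$, which gives the claimed equality.
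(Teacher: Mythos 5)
Your proposal is correct and takes essentially the same route as the paper: the nontrivial direction ($a \in Ann_R(S) \Rightarrow Ann_R(a) \not\subseteq Ann_R(S)$) is exactly Proposition \ref{l0.1}(e), which you simply unfold into its ingredients (minimality of $Ann_R(S)$ via Proposition \ref{l0.1}(c) and faithfulness, reducedness of $R$ via coreducedness, and Matlis's characterization of minimal primes in reduced rings), while the easy direction is the same primeness argument from $a\,Ann_R(a)=0 \subseteq Ann_R(S)$. The only difference is organizational: you state it as an elementwise biconditional over $Max^s(M)$, whereas the paper proves disjointness of the two sets and then the covering inclusion, which amounts to the same thing.
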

\begin{proof}
If $S\in V^s((0:_Ma))$, then we have $a \in Ann_R(S)$. Now by Proposition \ref{l0.1} (e), $Ann_R(a) \not\subseteq Ann_R(S)$ and so $S  \not\subseteq (0:_MAnn_R(a))$. Therefore, $V^s((0:_MAnn_R(a))\cap V^s((0:_Ma))=\emptyset$. On the other
hand, if $S\in Max^s(M) \setminus V^s((0:_Ma))$, then $aAnn_R(a)=0 \subseteq Ann_R(S)$ implies that $Ann_R(a) \subseteq Ann_R(S)$.
Hence, $S \in V^s((0:_MAnn_R(a))$ as needed.
\end{proof}

The intersection of all minimal prime ideals of $R$ containing an ideal $I$ of $R$ is denote by $\mathfrak{P}_I$.
\begin{thm}\label{t119.3}
Let $M$ be a faithful finitely generated comultiplication $R$-module. Then $(0:_M\mathfrak{P}_I)=\mathfrak{S}_{(0:_MI)}$ for each ideal $I$ of $R$.
\end{thm}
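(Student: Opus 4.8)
The plan is to realize both sides as the same object built from the minimal prime ideals of $R$ that contain $I$, exploiting faithfulness (so that $Ann_R(M)=0$, and "minimal over $Ann_R(M)$" means simply "minimal prime of $R$"). Two preliminary facts will carry the argument. First, since $M$ is a comultiplication module, every submodule $N$ satisfies $N=(0:_M Ann_R(N))$: the inclusion $N\subseteq(0:_M Ann_R(N))$ is automatic, and writing $N=(0:_MJ)$ gives $J\subseteq Ann_R(N)$, whence $(0:_M Ann_R(N))\subseteq(0:_MJ)=N$. Second, for any prime $\mathfrak{q}$ of $R$, Proposition~\ref{l0.1}(a) applied with $N=M$ (legitimate because $Ann_R(M)=0\subseteq\mathfrak{q}$) yields $Ann_R((0:_M\mathfrak{q}))=\mathfrak{q}$.

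Using these, I would first characterize the maximal second submodules of $M$ as exactly the submodules of the form $(0:_M\mathfrak{q})$ with $\mathfrak{q}$ a minimal prime of $R$. For one direction, the identity $Ann_R((0:_M\mathfrak{q}))=\mathfrak{q}$ together with Proposition~\ref{l0.1}(d) shows each such $(0:_M\mathfrak{q})$ is a maximal second submodule; for the converse, Proposition~\ref{l0.1}(c) (with $N=M$) shows that a maximal second $S$ has $Ann_R(S)$ equal to a minimal prime $\mathfrak{q}$ of $R$, and the comultiplication identity then forces $S=(0:_M Ann_R(S))=(0:_M\mathfrak{q})$. Next I would pin down which of these sit inside $(0:_MI)$ by proving that $(0:_M\mathfrak{q})\subseteq(0:_MI)$ if and only if $I\subseteq\mathfrak{q}$: the reverse implication is immediate from the definition of $(0:_M-)$, while the forward one follows from $I\cdot(0:_M\mathfrak{q})=0$, i.e. $I\subseteq Ann_R((0:_M\mathfrak{q}))=\mathfrak{q}$.

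Combining the last two steps, the maximal second submodules contained in $(0:_MI)$ are precisely the $(0:_M\mathfrak{q})$ for which $\mathfrak{q}$ is a minimal prime of $R$ containing $I$, so by the definition of $\mathfrak{S}$ I would write $\mathfrak{S}_{(0:_MI)}=\sum_{\mathfrak{q}}(0:_M\mathfrak{q})$, the sum taken over exactly the minimal primes $\mathfrak{q}$ of $R$ containing $I$. Each such $\mathfrak{q}$ contains $Ann_R(M)=0$, so Proposition~\ref{l0.1}(b) applies to this family and converts the sum into $(0:_M\bigcap_{\mathfrak{q}}\mathfrak{q})$. Since $\bigcap_{\mathfrak{q}}\mathfrak{q}=\mathfrak{P}_I$ by the definition of $\mathfrak{P}_I$, this equals $(0:_M\mathfrak{P}_I)$, completing the proof.

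I expect the crux to be the second paragraph---specifically, correctly matching the index set of $\mathfrak{S}_{(0:_MI)}$ with the minimal primes of $R$ that contain $I$ (rather than, say, the primes minimal over $I$), since it is exactly this identification that makes $\bigcap_{\mathfrak{q}}\mathfrak{q}=\mathfrak{P}_I$ hold and lets Proposition~\ref{l0.1}(b) be invoked verbatim. Once the containment criterion and this matching are secured, the two appeals to Proposition~\ref{l0.1} are routine.
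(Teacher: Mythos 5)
Your proof is correct and follows essentially the same route as the paper: both arguments rest on Proposition~\ref{l0.1}, using parts (c) and (d) (plus the comultiplication identity $S=(0:_M Ann_R(S))$) to identify the maximal second submodules inside $(0:_MI)$ with the modules $(0:_M\mathfrak{q})$ for $\mathfrak{q}$ a minimal prime of $R$ containing $I$, and then part (b) to convert $\sum_{\mathfrak{q}}(0:_M\mathfrak{q})$ into $(0:_M\mathfrak{P}_I)$. Your reorganization into a single characterization of $V^s((0:_MI))$, rather than the paper's two separate inclusions, is a cosmetic difference only.
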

\begin{proof}
First we show that
$\mathfrak{S}_{(0:_MI)}\subseteq (0:_M\mathfrak{P}_I)$.
Let $I$ be an ideal of $R$ and $X$ be a maximal second submodule of $M$ such that $X \subseteq (0:_MI)$. Then $I \subseteq Ann_R(X)$. Since by Proposition \ref{l0.1} (c),  $Ann_R(X)$ is a minimal prime ideal of $R$, $\mathfrak{P}_I \subseteq Ann_R(X)$. Thus  $X=(0:_MAnn_R(X)) \subseteq (0:_M\mathfrak{P}_I)$. Hence,
$\mathfrak{S}_{(0:_MI)}\subseteq (0:_M\mathfrak{P}_I)$ for each ideal $I$ of $R$. For the converse, let $\mathfrak{P}_I=\cap \mathfrak{p}_i$, where $\mathfrak{p}_i \in Min(R), I \subseteq \mathfrak{p}_i$, and $Min(R)$ is the set of all minimal prime ideals of $R$. As $M$ is a faithful finitely generated comultiplication $R$-module, by using Proposition \ref{l0.1} (b),
$$
\sum_{(0:_M\mathfrak{p}_i)\not=0}(0:_M\mathfrak{p}_i)=\sum (0:_M\mathfrak{p}_i)=(0:_M\cap{\mathfrak{p}_i})=(0:_M\mathfrak{P}_I)\subseteq (0:_MI).
$$

This implies that $(0:_M\mathfrak{P}_I)\subseteq \mathfrak{S}_{(0:_MI)}$ since by Proposition \ref{l0.1} (d), $(0:_M\mathfrak{p}_i)\not=0$ is a maximal second submodule of $M$.
\end{proof}

\begin{rem}\label{r2.1}\cite[Remark 2.1]{MR2821719}
Let $N$ and $K$ be two submodules of an $R$-module $M$. To prove $N\subseteq K$, it is enough to show that if $L$ is a completely irreducible submodule of $M$ such that $K\subseteq L$, then $N\subseteq L$.
\end{rem}

\begin{thm}\label{t1.5}
Let $M$ be a finitely generated coreduced comultiplication $R$-module. Then we have the following.
\begin{itemize}
\item [(a)] If $M$ is a faithful $R$-module, then $V^s((0:_Ma))=V^s(Ann_R(a)M)$ for each  $a \in R$.
\item [(b)] $Ann_R(IJM)M=Ann_R(IM)M + Ann_R(JM)M$ for each ideals $I, J$ of $R$.
\end{itemize}
\end{thm}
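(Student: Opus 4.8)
Throughout the plan I write $\mathfrak{a}=Ann_R(M)$, and I use two standing facts: since $M$ is comultiplication, every submodule $N$ satisfies $N=(0:_MAnn_R(N))$ (because $N=(0:_MI)$ forces $Ann_R(N)\supseteq I$ and hence $(0:_MAnn_R(N))\subseteq(0:_MI)=N\subseteq(0:_MAnn_R(N))$), and since $M$ is coreduced the ring $R/\mathfrak{a}$ is reduced, exactly as invoked in the proof of Proposition \ref{l0.1}(e).

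For part (a) the inclusion $V^s(Ann_R(a)M)\subseteq V^s((0:_Ma))$ is immediate, since $a\,Ann_R(a)=0$ gives $Ann_R(a)M\subseteq(0:_Ma)$. For the reverse I would take $S\in V^s((0:_Ma))$, so $aS=0$ and hence $a\in Ann_R(S)$. By Proposition \ref{l0.1}(c), $\mathfrak{p}:=Ann_R(S)$ is a minimal prime (here $\mathfrak{a}=0$ by faithfulness), and Proposition \ref{l0.1}(e) yields $Ann_R(a)\not\subseteq\mathfrak{p}$. Picking $s\in Ann_R(a)\setminus\mathfrak{p}$, secondness of $S$ forces $S\stackrel{s}\rightarrow S$ to be surjective, so $S=sS\subseteq sM\subseteq Ann_R(a)M$, i.e. $S\in V^s(Ann_R(a)M)$. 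This settles (a).

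For part (b) the inclusion $Ann_R(IM)M+Ann_R(JM)M\subseteq Ann_R(IJM)M$ is clear, since $IJM\subseteq IM,JM$ gives $Ann_R(IM)+Ann_R(JM)\subseteq Ann_R(IJM)$ and one multiplies by $M$. For the reverse I would apply Remark \ref{r2.1}: it suffices, for each completely irreducible $L$ with $Ann_R(IM)M+Ann_R(JM)M\subseteq L$, to prove $Ann_R(IJM)M\subseteq L$. Setting $\mathfrak{b}=(L:_RM)=\{r\in R:rM\subseteq L\}$, the hypothesis becomes $Ann_R(IM)\subseteq\mathfrak{b}$ and $Ann_R(JM)\subseteq\mathfrak{b}$, and the goal becomes $Ann_R(IJM)\subseteq\mathfrak{b}$. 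For $r\in Ann_R(IJM)$ one has $rIJM=0$, whence $rI\subseteq Ann_R(JM)\subseteq\mathfrak{b}$ and $rJ\subseteq Ann_R(IM)\subseteq\mathfrak{b}$, and also $rIJ\subseteq\mathfrak{a}\subseteq\mathfrak{b}$.

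The crux, and the step I expect to be the main obstacle, is passing from $rI,rJ\subseteq\mathfrak{b}$ to $r\in\mathfrak{b}$. My plan is to prove that $\mathfrak{b}=(L:_RM)$ is a prime ideal \emph{minimal} over $\mathfrak{a}$. Writing $L=(0:_MK)$ gives $\mathfrak{b}=(\mathfrak{a}:_RK)$, and I would argue that completely irreducibility of $L$ together with comultiplication and coreducedness forces $\mathfrak{b}$ to be prime; coreducedness (reducedness of $R/\mathfrak{a}$) is precisely what is needed here, as the failure for the comultiplication but non-coreduced $\mathbb{Z}$-module $\mathbb{Z}/4\mathbb{Z}$ (where $(L:_RM)=4\mathbb{Z}$ is not prime) shows. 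Granting primeness, $rIJ\subseteq\mathfrak{b}$ gives $r\in\mathfrak{b}$ or $I\subseteq\mathfrak{b}$ or $J\subseteq\mathfrak{b}$; in the latter two cases I would use minimality of $\mathfrak{b}$ over $\mathfrak{a}$ together with reducedness of $R/\mathfrak{a}$ to produce a single $s\notin\mathfrak{b}$ with $sI\subseteq\mathfrak{a}$ (localizing at the minimal prime $\mathfrak{b}/\mathfrak{a}$ of the reduced ring $R/\mathfrak{a}$, which is a field), so that $s\in Ann_R(IM)\setminus\mathfrak{b}$, contradicting $Ann_R(IM)\subseteq\mathfrak{b}$; symmetrically for $J$. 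The two delicate points I anticipate are establishing primeness and minimality of $(L:_RM)$ from completely irreducibility under these hypotheses, and, when $I$ (or $J$) is not finitely generated, extracting the single annihilating element $s$ rather than one element per generator; this last finesse is where I would lean on the finite generation of $M$, and if necessary reduce to the faithful case by base change to $R/\mathfrak{a}$.
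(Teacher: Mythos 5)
Your part~(a) is correct and is essentially the paper's own argument, streamlined: both proofs rest on Proposition~\ref{l0.1}~(e) to produce $s\in Ann_R(a)\setminus Ann_R(S)$ and on secondness to get $S=sS$. Your shortcut $S=sS\subseteq sM\subseteq Ann_R(a)M$ even avoids the paper's detour through completely irreducible submodules and Remark~\ref{r2.1}, so no complaints there.

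Part~(b), however, has a genuine gap. Your whole argument hangs on the claim that $\mathfrak{b}=(L:_RM)$ is a prime ideal minimal over $\mathfrak{a}=Ann_R(M)$ for every completely irreducible $L$ containing $Ann_R(IM)M+Ann_R(JM)M$, and you never prove it --- you only announce that you ``would argue'' it. This is not a routine fact: as your own example $\mathbb{Z}/4\mathbb{Z}$ shows, it fails without coreducedness, so coreducedness must enter the argument in an essential way, and you supply no mechanism for how it does. Moreover, even granting primeness and minimality, your disposal of the cases $I\subseteq\mathfrak{b}$ and $J\subseteq\mathfrak{b}$ requires a single element $s\notin\mathfrak{b}$ with $sIM=0$; localizing the reduced ring $R/\mathfrak{a}$ at the minimal prime $\mathfrak{b}/\mathfrak{a}$ only yields, for each individual $x\in I$, some $s_x\notin\mathfrak{b}$ with $s_xxM\subseteq \mathfrak{a}M=0$, and when $I$ is not finitely generated there is no evident way to combine these; finite generation of $M$ does not obviously help, since the obstruction lives in $I$, not in $M$. (Both missing claims are in fact true under the paper's hypotheses, but verifying them essentially amounts to working out the structure of finitely generated coreduced comultiplication modules --- via $sec(M)=M$, Proposition~\ref{l0.1}~(c) and Nakayama one finds that $R/\mathfrak{a}$ is a finite product of fields --- which is more work than the theorem itself.) The paper's proof needs neither claim: for $t\in Ann_R(IJM)$ one has $tJM\subseteq Ann_R(IM)M\subseteq L$; comultiplication gives $(L:_Mt)=(0:_MA)$ for some ideal $A$, so $JM\subseteq (0:_MA)$, i.e.\ $A\subseteq Ann_R(JM)$, hence $AM\subseteq L$ and $(L:_MA)=M$; coreducedness then yields $M=L+(0:_MA)=L+(L:_Mt)$, which forces $tM\subseteq L$, and Remark~\ref{r2.1} finishes. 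That maneuver --- rewriting $(L:_Mt)$ as $(0:_MA)$ and using coreducedness to split $M=L+(L:_Mt)$ --- is exactly the idea your plan is missing, and it replaces both of your unproved steps.
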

\begin{proof}
(a)  Let $M$ be a faithful $R$-module and $a \in R$.
Since $Ann_R(a)M\subseteq (0:_Ma)$, we have $V^s(Ann_R(a)M))\subseteq V^s((0:_Ma))$. Now let
$S$ be a maximal second submodule of $M$ contained in $(0:_Ma)$. Then there exists
 $b \in Ann_R(a) \setminus Ann_R(S)$ by Proposition \ref{l0.1} (e). Then, for any completely irreducible submodule $L$ of $M$ with $ Ann_R(a)M\subseteq L$, we have $S\subseteq M=(L:_Mb)$.  This implies that $S=bS\subseteq L$. Hence, by Remark \ref{r2.1}, $S\subseteq Ann_R(a)M$, as needed.

(b) Let $I, J$ be ideals of $R$. As $Ann_R(IM) \subseteq Ann_R(IJM)$ and $Ann_R(JM) \subseteq Ann_R(IJM)$, we have
$$
Ann_R(IM)M + Ann_R(JM)M\subseteq Ann_R(IJM)M.
$$
Now let $L$ be a completely irreducible submodule of $M$ such that  $Ann_R(IM)M + Ann_R(JM)M\subseteq L$. Let $t \in Ann_R(IJM)$. Then $Jt \subseteq Ann_R(IM)$ and so $tJM \subseteq L$. Since $M$ is a comultiplication module, $(L:_Mt)=(0:_MA)$ for some ideal $A$ of $R$. Hence, $A \subseteq Ann_R(JM)$ and so $AM\subseteq L$. Thus $M=(L:_MA)$. It follows that $M=L+(0:_MA)$ since $M$ is a coreduced module. So, $M=L+(L:_Mt)$. Thus $tM\subseteq L$. Therefore, $Ann_R(IJM)M\subseteq L$. Now the proof follows from Remark \ref{r2.1}.
\end{proof}

\begin{prop}\label{p4.6}
Let $M$ be a finitely generated coreduced comultiplication $R$-module. Then the following are equivalent:
\begin{itemize}
\item [(a)] For submodules $K, H$ of $M$, $(K:_RM)=(H:_RM)$ and $N \subseteq K$ imply that $N \subseteq H$;
\item [(b)] For submodules $K, H$ of $M$, $(K:_RM)\subseteq (H:_RM)$ and $N \subseteq K$ imply that $N \subseteq H$.
\end{itemize}
\end{prop}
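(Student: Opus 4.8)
The plan is to treat the two implications separately, since one of them is immediate and the whole content sits in the other. For (b) $\Rightarrow$ (a): if $(K:_RM)=(H:_RM)$ then in particular $(K:_RM)\subseteq (H:_RM)$, so the hypothesis of (b) is met, and together with $N\subseteq K$ it yields $N\subseteq H$. Thus I would dispose of (b) $\Rightarrow$ (a) in one line and concentrate on (a) $\Rightarrow$ (b).

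For (a) $\Rightarrow$ (b), the device I would use is to manufacture, out of the inclusion $(K:_RM)\subseteq (H:_RM)$, a pair of submodules having \emph{equal} colon ideals, so that statement (a) becomes applicable. The elementary identity driving this is
\[
(K\cap H:_RM)=(K:_RM)\cap (H:_RM),
\]
which holds for any two submodules because $rM\subseteq K\cap H$ is equivalent to $rM\subseteq K$ together with $rM\subseteq H$. Assuming the hypotheses of (b), namely $(K:_RM)\subseteq (H:_RM)$ and $N\subseteq K$, and combining the displayed identity with the inclusion gives $(K\cap H:_RM)=(K:_RM)$.

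Next I would apply statement (a) to the pair $K$ and $K\cap H$ (as the first and second submodules occurring in (a)). We have just shown $(K:_RM)=(K\cap H:_RM)$, and $N\subseteq K$ holds by assumption, so (a) forces $N\subseteq K\cap H$. Since $K\cap H\subseteq H$, this gives $N\subseteq H$, which is exactly the conclusion of (b).

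The only step that requires the right idea is the substitution of $K\cap H$ for $H$: once one observes that $K\cap H$ has the same colon ideal as $K$ under the given inclusion, (a) applies verbatim and everything else is formal bookkeeping. I would also note that this argument uses nothing beyond the colon identity above—neither finite generation, nor the comultiplication property, nor coreducedness enters—so the stated equivalence is in fact a purely formal feature of the operation $(-:_RM)$, with the standing hypotheses on $M$ presumably retained only for consistency with the surrounding development.
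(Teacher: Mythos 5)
Your proof is correct, and it takes a genuinely different route from the paper's. Both arguments follow the same template: from $(K:_RM)\subseteq (H:_RM)$ one manufactures a pair of submodules with \emph{equal} colon ideals so that (a) can be invoked, and the only real choice is the bridge submodule. The paper goes upward: using the comultiplication hypothesis it writes $K=(0:_MI)$ and $H=(0:_MJ)$, proves $((0:_MIJ):_RM)=((0:_MJ):_RM)$ by means of Theorem \ref{t1.5} (b) (this is exactly where finite generation, coreducedness, and the comultiplication property are consumed), and then applies (a) to the pair $((0:_MIJ),(0:_MJ))$, noting that $N\subseteq K\subseteq (0:_MIJ)$. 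You go downward instead: the identity $(K\cap H:_RM)=(K:_RM)\cap(H:_RM)$ converts the hypothesis into $(K\cap H:_RM)=(K:_RM)$, and (a) applied to the pair $(K,K\cap H)$ gives $N\subseteq K\cap H\subseteq H$. Your closing observation is accurate and worth emphasizing: your argument uses nothing about $M$, so the equivalence of (a) and (b) holds for an arbitrary $R$-module, whereas the paper's proof genuinely needs the standing hypotheses because it routes through Theorem \ref{t1.5} (b). What the paper's computation buys in exchange is the annihilator identity $Ann_R(IJM)=Ann_R(JM)$ obtained along the way, which matches the ideal-theoretic bookkeeping used in the surrounding results; but as a proof of the stated equivalence, yours is shorter, self-contained, and strictly more general.
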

\begin{proof}
$(a)\Rightarrow (b)$
Let $K, H$ be submodules of $M$. As $M$ is a comultiplication $R$-module, there exist ideals $I$ and $J$ of $R$ such that $K=(0:_MI)$ and $H=(0:_MJ)$.
Assume that $(K:_RM)\subseteq (H:_RM)$ and  $N \subseteq K$. Then $Ann_R(IM)M \subseteq Ann_R(JM)M$. Hence
$Ann_R(IJM)M=Ann_R(JM)M$ by Theorem \ref{t1.5} (b). This implies that $Ann_R(IJM)=Ann_R(JM)$ and so $((0:_MIJ):_RM)=((0:_MJ):_RM)$. Now as $IJN=0$ we have $JN=0$ by part (a). Thus $N\subseteq (0:_MJ)=H$.

$(b)\Rightarrow (a)$
This is clear.
\end{proof}

\begin{cor}\label{cc4.6}
Let $M$ be a finitely generated coreduced comultiplication $R$-module. Then the following are equivalent:
\begin{itemize}
\item [(a)] For $a, b\in R$, $Ann_R(aM)=Ann_R(bM)$ and $aN=0$ imply that $bN=0$;
\item [(b)] For $a, b\in R$, $Ann_R(aM)\subseteq Ann_R(bM)$ and $aN=0$ imply that $bN=0$.
\end{itemize}
\end{cor}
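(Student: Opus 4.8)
The plan is to obtain this corollary as the principal-element specialization of Proposition \ref{p4.6}, but to run it as its own argument rather than cite it verbatim, since the proof of Proposition \ref{p4.6} invokes condition (a) for completely general submodules while here only cyclic data is available. The key translation is that, for $a\in R$, one has $((0:_Ma):_RM)=Ann_R(aM)$ and $N\subseteq (0:_Ma)$ if and only if $aN=0$. Under this dictionary, taking $K=(0:_Ma)$ and $H=(0:_Mb)$, conditions (a) and (b) of the corollary are exactly the instances of conditions (a) and (b) of Proposition \ref{p4.6} for submodules of the form $(0:_Mc)$ with $c\in R$.

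The implication $(b)\Rightarrow(a)$ is immediate, since the hypothesis $Ann_R(aM)=Ann_R(bM)$ of (a) is a special case of the hypothesis $Ann_R(aM)\subseteq Ann_R(bM)$ of (b). For $(a)\Rightarrow(b)$ I would mirror the proof of Proposition \ref{p4.6}: assume (a), and let $a,b\in R$ satisfy $Ann_R(aM)\subseteq Ann_R(bM)$ and $aN=0$. Applying Theorem \ref{t1.5}(b) to the principal ideals $Ra$ and $Rb$ (so that $RaM=aM$, $RbM=bM$, and $(Ra)(Rb)M=abM$) gives $Ann_R(abM)M=Ann_R(aM)M+Ann_R(bM)M$; the inclusion $Ann_R(aM)\subseteq Ann_R(bM)$ then collapses the right-hand side to $Ann_R(bM)M$, so that $Ann_R(abM)M=Ann_R(bM)M$.

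From here the plan is to upgrade this equality of submodules to the equality of ideals $Ann_R(abM)=Ann_R(bM)$, exactly as in Proposition \ref{p4.6}. This is the step I expect to be the crux: it uses the comultiplication hypothesis through the identity $Ann_R(XM)=(Ann_R(M):_RX)$ for every ideal $X$, together with the triple-colon identity $(J:(J:(J:Y)))=(J:Y)$ for $J=Ann_R(M)$. Since $Ann_R(abM)$ and $Ann_R(bM)$ both lie in the image of $Y\mapsto(J:Y)$, applying $(J:-)$ to the submodule equality $Ann_R(abM)M=Ann_R(bM)M$ and using this identity forces $Ann_R(abM)=Ann_R(bM)$. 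Once this equality is in hand, I would observe that $aN=0$ yields $abN=0$, and then apply the assumed condition (a) to the pair $(ab,b)$: from $Ann_R(abM)=Ann_R(bM)$ and $(ab)N=0$ it delivers $bN=0$, which is precisely condition (b). The only delicate point is the ideal-equality step, and it is handled by the same argument already used in the proof of Proposition \ref{p4.6}.
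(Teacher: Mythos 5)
Your proof is correct, and in substance it follows the same route as the paper: the paper disposes of this corollary in one line, citing Proposition \ref{p4.6} with $K=(0:_Ma)$ and $H=(0:_Mb)$. What you add is a genuine refinement rather than a different method, and it is worth recording. First, you are right that the bare citation is loose: conditions (a) and (b) of the corollary quantify only over submodules of the form $(0:_Mc)$ with $c\in R$ (via the dictionary $((0:_Mc):_RM)=Ann_R(cM)$ and $N\subseteq (0:_Mc)\Leftrightarrow cN=0$), so corollary-(a) is weaker than proposition-(a), and the equivalence in Proposition \ref{p4.6} does not literally instantiate to the corollary; one must re-run the proposition's proof on the principal ideals $I=Ra$, $J=Rb$ and check that its single appeal to condition (a) --- made at the pair $(0:_MIJ)$, $(0:_MJ)$, which here becomes the pair $(ab,b)$ --- stays inside the cyclic class, exactly as you do. Second, your Galois-connection argument (writing $Ann_R(abM)=(J:_Rab)$ and $Ann_R(bM)=(J:_Rb)$ for $J=Ann_R(M)$, and using $(J:_R(J:_R(J:_RY)))=(J:_RY)$) validly supplies the step that the paper's proof of Proposition \ref{p4.6} only asserts, namely that $Ann_R(abM)M=Ann_R(bM)M$ forces $Ann_R(abM)=Ann_R(bM)$. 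One small correction: that step does not use the comultiplication hypothesis at all, contrary to your remark --- the identity $Ann_R(XM)=(Ann_R(M):_RX)$ and the triple-colon identity hold for every module over every commutative ring; the hypotheses of comultiplication, coreducedness, and finite generation enter only through Theorem \ref{t1.5}(b).
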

\begin{proof}
This follows from Proposition \ref{p4.6} by setting $K=(0:_Ma)$ and $H=(0:_Mb)$.
\end{proof}

Let $S$ be a second submodule of an $R$-module $M$. Then (see \cite{MR3073398, MR2917107, MR3588217})
$$
I^M_{Ann_R(S)}(M)=\cap \{L \mid  L \\\ is \\\ a \\\ completely \\\
irreducible
\\\ submodule \\\ of \\\ M\\\ and
$$
$$
 (L:_RM) \not \subseteq Ann_R(S)\} .
$$

\begin{prop}\label{p0.1}
Let $S$ be a second submodule of an $R$-module $M$. Then $S \subseteq I^M_{Ann_R(S)}(M)$.
\end{prop}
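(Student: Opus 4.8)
The plan is to unravel the definition of $I^M_{Ann_R(S)}(M)$ and prove containment by checking $S$ against each completely irreducible submodule appearing in the intersection. Concretely, $I^M_{Ann_R(S)}(M)$ is the intersection of all completely irreducible submodules $L$ of $M$ satisfying $(L:_RM)\not\subseteq Ann_R(S)$. So it suffices to fix an arbitrary such $L$ and show $S\subseteq L$; taking the intersection over all of them then yields $S\subseteq I^M_{Ann_R(S)}(M)$.

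First I would exploit the defining condition $(L:_RM)\not\subseteq Ann_R(S)$ to produce a witness element. Since the ideal $(L:_RM)$ is not contained in $Ann_R(S)$, there exists $a\in (L:_RM)$ with $a\notin Ann_R(S)$. The first clause, $a\in(L:_RM)$, means $aM\subseteq L$, and in particular $aS\subseteq aM\subseteq L$. The second clause, $a\notin Ann_R(S)$, means precisely that $aS\neq 0$.

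The heart of the argument is the second-submodule dichotomy. Because $S$ is a second submodule, the homomorphism $S\xrightarrow{a}S$ is either surjective or zero. The condition $aS\neq 0$ rules out the zero case, so the map is surjective; that is, $aS=S$. Combining this with $aS\subseteq L$ from the previous step gives $S=aS\subseteq L$. Since $L$ was an arbitrary completely irreducible submodule with $(L:_RM)\not\subseteq Ann_R(S)$, we conclude $S\subseteq\bigcap L=I^M_{Ann_R(S)}(M)$.

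I do not anticipate a genuine obstacle here: the proof is a short direct verification once the definition is unpacked, and the only real content is recognizing that $a\notin Ann_R(S)$ forces the multiplication map to be surjective rather than zero. The one point to state carefully is the equivalence $a\notin Ann_R(S)\iff aS\neq 0$, which is immediate from the definition of the annihilator, so that the second-submodule property can be invoked. No finiteness, comultiplication, or coreducedness hypotheses are needed for this statement; it holds for an arbitrary $R$-module $M$ and any second submodule $S$.
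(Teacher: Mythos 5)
Your proof is correct and matches the paper's argument essentially verbatim: both pick an element $a \in (L:_RM)\setminus Ann_R(S)$ for each completely irreducible $L$ in the defining intersection, use the second-submodule dichotomy to get $aS = S$, and conclude $S = aS \subseteq L$. Your extra remark that no comultiplication or finiteness hypotheses are needed is also consistent with how the paper states the result.
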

\begin{proof}
Let $L$ be a completely irreducible submodule of $M$ such that  $(L:_RM) \not \subseteq Ann_R(S)$. Then there exists $t \in (L:_RM)$ such that $t \not \in Ann_R(S)$. Thus, as $S$ is second, $tS=S$. Now $t \in (L:_RM)$ implies that $S \subseteq L$. Therefore, $S \subseteq I^M_{Ann_R(S)}(M)$.
\end{proof}

For each prime ideal $\mathfrak{p} $ of $R$, set $nil \mathfrak{p}  = \cap \mathfrak{p}'$, where $\mathfrak{p}'$ ranges over all prime ideals of $R$ contained in $\mathfrak{p}$.

Let $\mathfrak{p} $ be a prime ideal of $R$. Then the set
$O_{\mathfrak{p}} = \{a \in R : Ann_R(a)\not \subseteq \mathfrak{p} \}$
is an ideal of $R$ contained in $\mathfrak{p}$ and $\sqrt{O_\mathfrak{p}} = nil \mathfrak{p}$, so if $\mathfrak{p} $ is a minimal
prime ideal of $R$, $\mathfrak{p} = \sqrt{O_\mathfrak{p} }$, in particular, $\mathfrak{p}= O\mathfrak{p}$, when $R$ is a reduced ring \cite{MR2839935}.
\begin{note}
Let $S$ be a second submodule of an $R$-module $M$.
We define $conil(S)=\sum S' $, where $S' $ ranges over all second submodules of $M$ such that $S\subseteq S' $.
\end{note}

For a submodule $N$ of an $R$-module $M$, the \emph{second radical} (or \emph{second socle}) of $N$ is defined  as the sum of all second submodules of $M$, contained in $N$, and it is denoted by $sec(N)$ (or $soc(N)$). In case $N$ does not contain any second submodule, the second radical of $N$ is defined to be $(0)$. $N \not =0$ is said to be a \emph{second radical submodule of $M$} if $sec(N)=N$ \cite{MR3085034, MR3073398}.

We set $\overline{I}=I+Ann_R(M)$ for each ideal $I$ of $R$ and $\overline{a}=a+Ann_R(M)$ for each $a \in R$.
\begin{thm}\label{t0.2}
Let $S$ be a second submodule of a finitely generated comultiplication $R$-module $M$. Then we have the following.
\begin{itemize}
\item [(a)]
$$
conil(S) = (0:_Mnil(\overline{Ann_R(S)}) =(0:_M\sqrt{O_{\overline{Ann_R(S)}}}).
$$
\item [(b)]
$$
sec(I^M_{Ann_R(S)}(M))=conil(S)=(0:_M\sqrt{O_{\overline{Ann_R(S)}}}).
$$
In particular, if $M$ is coreduced, then $sec(I^M_{Ann_R(S)}(M))=I^M_{Ann_R(S)}(M)$.
\end{itemize}

\end{thm}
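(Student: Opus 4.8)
The plan is to prove the two displayed equalities in (a) and (b) by reducing everything to the prime $\mathfrak{p}:=Ann_R(S)$ and then exploiting the comultiplication structure through Proposition \ref{l0.1}. First I would record three preliminary facts. Since $S$ is second, $\mathfrak{p}=Ann_R(S)$ is prime with $Ann_R(M)\subseteq \mathfrak{p}$, so $\overline{Ann_R(S)}=\mathfrak{p}$ is a prime of $\overline{R}:=R/Ann_R(M)$ and the second equality $nil(\overline{Ann_R(S)})=\sqrt{O_{\overline{Ann_R(S)}}}$ is just the quoted property of $O_{\mathfrak{p}}$. Because $M$ is a comultiplication module, every submodule $N$ satisfies $N=(0:_MAnn_R(N))$; in particular each second submodule $S'$ equals $(0:_MAnn_R(S'))$. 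With the bar convention, $nil(\overline{Ann_R(S)})$ is the intersection of the primes $\mathfrak{p}'$ with $Ann_R(M)\subseteq \mathfrak{p}'\subseteq \mathfrak{p}$, which keeps all primes in sight above $Ann_R(M)$.

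For (a) it then remains to prove $conil(S)=(0:_Mnil(\overline{Ann_R(S)}))$. For $\subseteq$, a second submodule $S'\supseteq S$ has $Ann_R(S')$ prime with $Ann_R(M)\subseteq Ann_R(S')\subseteq \mathfrak{p}$, so $nil(\overline{Ann_R(S)})\subseteq Ann_R(S')$ and hence $S'=(0:_MAnn_R(S'))\subseteq (0:_Mnil(\overline{Ann_R(S)}))$; summing over all such $S'$ gives the inclusion. For $\supseteq$, writing $nil(\overline{Ann_R(S)})=\cap \mathfrak{p}'$ and applying Proposition \ref{l0.1}(b) yields $(0:_Mnil(\overline{Ann_R(S)}))=\sum(0:_M\mathfrak{p}')$; each summand contains $S$ (since $\mathfrak{p}'S=0$), so is nonzero, hence is second by \cite[Lemma 2.10]{MR3091647} and lies in $conil(S)$.

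For the first equality of (b) I would set $W:=I^M_{Ann_R(S)}(M)$ and run a squeeze built from one lemma and one key inclusion. The lemma is that $sec((0:_MI))=(0:_M\sqrt{I})$ for every ideal $I$ with $Ann_R(M)\subseteq I$: writing $\sqrt{I}=\cap\{\mathfrak{q}\supseteq I\}$ and using Proposition \ref{l0.1}(b) together with \cite[Lemma 2.10]{MR3091647} shows $(0:_M\sqrt{I})$ is a sum of second submodules contained in $(0:_MI)$, while any second $T\subseteq(0:_MI)$ has $\sqrt{I}\subseteq Ann_R(T)$ and so $T=(0:_MAnn_R(T))\subseteq(0:_M\sqrt{I})$. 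The key inclusion is $W\subseteq(0:_MO_{\overline{Ann_R(S)}})$: given $a\in O_{\overline{Ann_R(S)}}$ pick $b\notin \mathfrak{p}$ with $abM=0$; every completely irreducible $L\supseteq bM$ satisfies $(L:_RM)\not\subseteq \mathfrak{p}$, so $W\subseteq L$ by the definition of $W$, whence $W\subseteq\cap\{L:bM\subseteq L\}=bM$ and $aW\subseteq abM=0$. Generalizing Proposition \ref{p0.1} (replace $S$ by a second $S'\supseteq S$ and use $Ann_R(S')\subseteq \mathfrak{p}$) gives $conil(S)\subseteq sec(W)$. Combining these, $conil(S)\subseteq sec(W)\subseteq sec((0:_MO_{\overline{Ann_R(S)}}))=(0:_M\sqrt{O_{\overline{Ann_R(S)}}})=(0:_Mnil(\overline{Ann_R(S)}))=conil(S)$, forcing $sec(W)=conil(S)$.

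For the final assertion, coreducedness makes $\overline{R}$ a reduced ring by \cite[Proposition 2.15]{MR3755273}. In a reduced ring $O_{\mathfrak{q}}$ is a radical ideal, since $a^nb=0$ forces $(ab)^n=0$ and hence $ab=0$; thus $O_{\overline{Ann_R(S)}}=\sqrt{O_{\overline{Ann_R(S)}}}=nil(\overline{Ann_R(S)})$. Then $(0:_MO_{\overline{Ann_R(S)}})=conil(S)=sec(W)$, and since $sec(W)\subseteq W\subseteq(0:_MO_{\overline{Ann_R(S)}})$ we conclude $W=sec(W)$, that is $sec(I^M_{Ann_R(S)}(M))=I^M_{Ann_R(S)}(M)$. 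I expect the main obstacle to be the key inclusion $W\subseteq(0:_MO_{\overline{Ann_R(S)}})$ together with the bookkeeping needed to keep every prime above $Ann_R(M)$ so that Proposition \ref{l0.1}(b) and \cite[Lemma 2.10]{MR3091647} apply; once the squeeze is in place, the remaining steps are formal.
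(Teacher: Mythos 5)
Your proposal is correct and takes essentially the same route as the paper: part (a) via Proposition \ref{l0.1}(b) together with the identity $nil(\overline{Ann_R(S)})=\sqrt{O_{\overline{Ann_R(S)}}}$, and part (b) via the squeeze $conil(S)\subseteq sec(I^M_{Ann_R(S)}(M))\subseteq (0:_M\sqrt{O_{\overline{Ann_R(S)}}})=conil(S)$, with the coreduced case settled by $O_{\overline{Ann_R(S)}}=\sqrt{O_{\overline{Ann_R(S)}}}$ in the reduced ring $R/Ann_R(M)$. The only (harmless) deviation is that where the paper invokes \cite[Lemma 2.1]{MR3588217} and \cite[Proposition 2.1]{MR3091647} to bound $sec(I^M_{Ann_R(S)}(M))$ from above, you instead prove the inclusion $I^M_{Ann_R(S)}(M)\subseteq (0:_MO_{\overline{Ann_R(S)}})$ and the identity $sec((0:_MI))=(0:_M\sqrt{I})$ directly from Proposition \ref{l0.1}(b), which is equally valid.
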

\begin{proof}
(a) Since $S$ is second, we have $\overline{Ann_R(S)}$ is a prime ideal of $\overline{R}$. Thus $\sqrt{O_{\overline{Ann_R(S)}}} = nil \overline{Ann_R(S)}$ by \cite{MR2839935}. Hence, $ (0:_M(nil(\overline{Ann_R(S)}) =(0:_M\sqrt{O_{\overline{Ann_R(S)}}})$.
Now let $S'$ be a second submodule of $M$ with $S\subseteq S'$. Then $Ann_R(S') \subseteq Ann_R(S)$. Thus $nil(\overline{Ann_R(S)})\subseteq \overline{Ann_R(S')}$. By using \cite[Proposition 12]{MR3934877}, it follows that
$$
conil(S)=\sum S'=\sum (0:_M\overline{Ann_R(S')})=(0:_M\cap \overline{Ann_R(S')})
$$
$$
\subseteq  (0:_Mnil(\overline{Ann_R(S)}),
$$
where $S'$ ranges over all second submodules of $M$ such that $S\subseteq S'$.
On the other hand by Proposition \ref{l0.1} (b),
$$
(0:_Mnil(\overline{Ann_R(S)}) =(0:_M\cap {\mathfrak{p}})=\sum (0:_M {\mathfrak{p}})\subseteq conil(S),
$$
where $\mathfrak{p}$ ranges over all prime ideals of $R$ contained in $\overline{Ann_R(S)}$.

(b) Assume that $S'$ is a second submodule of $M$ such that $S\subseteq S'$. Then $Ann_R(S')\subseteq Ann_R(S)$. Thus for any completely irreducible submodule $L$ of $M$ with $(L:_RM) \not \subseteq Ann_R(S)$, we have $(L:_RM) \not \subseteq Ann_R(S')$.
Hence there exists $t \in (L:_RM)$ such that $t \not \in Ann_R(S')$. Thus, as $S'$ is second, $tS'=S'$. Now $t \in (L:_RM)$ implies that $S' \subseteq L$. It follows that $conil(S)\subseteq sec(I^M_{Ann_R(S)}(M))$.
Now let $\overline{a}\in nil(\overline{Ann_R(S)})$. Then for some positive integer $n$, $Ann_R(a^n) \not \subseteq Ann_R(S)$. This implies that $Ann_R(a^nM) \not \subseteq Ann_R(S)$. Therefore for each completely irreducible submodule $L$ of $M$, $((L:_Ma^n):_RM) =(L:_Ra^nM) \not \subseteq Ann_R(S)$. Since by \cite[Lemma 2.1]{MR3588217}, $(L:_Ra^nM)$ is a completely irreducible submodule of $M$, we have $I^M_{Ann_R(S)}(M)\subseteq (L:_Ra^nM)$ and so $a^nI^M_{Ann_R(S)}(M) \subseteq L$. Thus $a^nI^M_{Ann_R(S)}(M) \subseteq \cap L$, where $L$ ranges over all completely irreducible submodules of $M$. Thus $a^nI^M_{Ann_R(S)}(M)=0$. So $a \in \sqrt{Ann_R(I^M_{Ann_R(S)}(M))}$.
Thus by \cite[Proposition 2.1]{MR3091647},  we have
$$
sec(I^M_{Ann_R(S)}(M))=sec((0:_MAnn_R(I^M_{Ann_R(S)}(M))))\subseteq
$$
$$
(0:_M\sqrt{Ann_R(I^M_{Ann_R(S)}(M))})\subseteq (0:_M\sqrt{O_{\overline{Ann_R(S)}}}).
$$
This completes the proof of the first part by using part (a).
For the second part, let $M$ be a coreduced $R$-module. Then $R/Ann_R(M)$ is a
reduced ring by \cite[Proposition 2.15]{MR3755273}, and this implies that $\sqrt{O_{\overline{Ann_R(S)}}}=O_{\overline{Ann_R(S)}}$.  Now we have
$$
I^M_{Ann_R(S)}(M)\subseteq (0:_MO_{(\overline{Ann_R(S)})}))= (0:_M\sqrt{O_{(\overline{Ann_R(S)})}})=sec(I^M_{Ann_R(S)}(M)).
$$
This completes the proof because the reverse inclusion is clear.
\end{proof}

\begin{cor}\label{c00.3}
Let $S$ be a submodule of a finitely generated comultiplication $R$-module $M$. Then $S$ is a maximal second submodule of $M$ if and only if
$S=sec(I^M_{Ann_R(S)}(M))$. In particular, if $M$ is a coreduced $R$-module, then $S$ is a maximal second submodule of $M$
if and only if $S=I^M_{Ann_R(S)}(M)$.
\end{cor}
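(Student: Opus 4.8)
The plan is to establish the equivalence $S\text{ maximal second}\iff S=sec(I^M_{Ann_R(S)}(M))$ and then read off the coreduced statement by substitution. I will use three facts available from the excerpt: since $M$ is a comultiplication module every submodule satisfies $S=(0:_MAnn_R(S))$; Proposition \ref{l0.1}(c),(d) say that for comultiplication $M$ the submodule $S$ is a maximal second submodule exactly when $Ann_R(S)$ is a prime ideal minimal over $Ann_R(M)$; and Theorem \ref{t0.2}(b) gives $sec(I^M_{Ann_R(S)}(M))=conil(S)$ \emph{whenever $S$ is second}. Replacing $R$ by $\overline R=R/Ann_R(M)$ I may assume $M$ is faithful, so that ``minimal over $Ann_R(M)$'' reads ``minimal prime of $R$'' and $J\mapsto(0:_MJ)$ is injective on radical ideals.

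For the forward implication, suppose $S$ is a maximal second submodule. Then $S$ is second, so Theorem \ref{t0.2}(b) applies and it suffices to prove $conil(S)=S$, equivalently that $S$ is the only second submodule containing $S$. If $S'$ is second with $S\subseteq S'$ and $S'\neq M$, then $S\subsetneq S'\subsetneq M$ is forbidden by maximality, so $S'=S$. The only remaining possibility is $S'=M$, which requires $M$ itself to be second; but then $Ann_R(M)$ is a prime ideal and is its own unique minimal prime, so by Proposition \ref{l0.1}(c) we get $Ann_R(S)=Ann_R(M)$ and hence $S=(0:_MAnn_R(S))=M=S'$. In all cases $S'=S$, giving $conil(S)=S$ and therefore $S=sec(I^M_{Ann_R(S)}(M))$.

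The converse implication is where the real work sits, because the hypothesis $S=sec(I^M_{Ann_R(S)}(M))$ does not on its face tell us that $S$ is second, whereas Theorem \ref{t0.2}(b) is only licensed for second $S$. Writing $P=Ann_R(S)$ and $S=(0:_MP)$, the first useful observation is that $P$ is automatically a radical ideal: as $S=sec(I^M_P(M))$ is the sum of the second submodules $T\subseteq I^M_P(M)$, we have $P=Ann_R(S)=\bigcap_T Ann_R(T)$, an intersection of prime ideals. If $P$ is prime then $S$ is second, Theorem \ref{t0.2}(b) gives $S=conil(S)$, hence no second submodule strictly contains $S$, and in particular $S$ is maximal second. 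The crux is therefore to rule out the case that $P$ is radical but \emph{not} prime. Here I plan to argue by contradiction: the minimal primes $\mathfrak q_1,\mathfrak q_2,\dots$ over $P$ are then at least two distinct primes with $P=\bigcap_i\mathfrak q_i$, each $S_i:=(0:_M\mathfrak q_i)$ is a second submodule with $Ann_R(S_i)=\mathfrak q_i$ by Proposition \ref{l0.1}(a), and Proposition \ref{l0.1}(b) gives $S=(0:_M\bigcap_i\mathfrak q_i)=\sum_iS_i$. The hypothesis forces $S\subseteq I^M_P(M)$, hence each $S_i\subseteq I^M_P(M)$; I expect to derive a contradiction by showing that such a containment forces $Ann_R(S_i)=\mathfrak q_i\subseteq P$, so that $\mathfrak q_i=P$ for every $i$ and $P$ is prime after all. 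Establishing this last implication---a converse to Proposition \ref{p0.1}, valid for comultiplication modules, to the effect that a second submodule lying in $I^M_P(M)$ must have annihilator contained in $P$---is the main obstacle, and is precisely the point at which comultiplication (through $S=(0:_MAnn_R(S))$ and the injectivity of $J\mapsto(0:_MJ)$ on radical ideals) has to be used. Once $P$ is known to be prime, Proposition \ref{l0.1}(d) also confirms that $S=(0:_MP)$ is maximal second.

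Finally, the ``in particular'' clause is immediate: when $M$ is coreduced, the last line of Theorem \ref{t0.2}(b) gives $sec(I^M_{Ann_R(S)}(M))=I^M_{Ann_R(S)}(M)$ for second $S$, so in the equivalence just proved the condition $S=sec(I^M_{Ann_R(S)}(M))$ may be replaced verbatim by $S=I^M_{Ann_R(S)}(M)$.
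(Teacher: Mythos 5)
Your forward implication is complete and careful (including the case of a second submodule $S'=M$, which you correctly collapse to $S'=S$ via $S=(0:_MAnn_R(S))$ and Proposition \ref{l0.1}(c)), and it expands the same route the paper takes, namely Theorem \ref{t0.2}(b) together with the observation that $conil(S)=S$ for a maximal second $S$. The problem is the converse, and you flag it yourself: your treatment of the case where $P=Ann_R(S)$ is radical but not prime rests entirely on the implication ``a second submodule contained in $I^M_P(M)$ has annihilator contained in $P$,'' which you call the main obstacle and never prove; ``I plan to argue'' and ``I expect to derive a contradiction'' mark this as a programme, not a proof. That step is genuinely nontrivial: Proposition \ref{p0.1} gives $T\subseteq I^M_{Ann_R(T)}(M)$, and since $I^M_Q(M)$ only grows as $Q$ grows, one gets $T\subseteq I^M_Q(M)$ for every $Q\supseteq Ann_R(T)$ --- the wrong direction; nothing in the paper's toolkit supplies the converse containment you need. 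What can be salvaged cheaply is this: the first inclusion in the proof of Theorem \ref{t0.2}(b) ($conil(S)\subseteq sec(I^M_{Ann_R(S)}(M))$) never uses that $S$ is second, so your hypothesis $S=sec(I^M_P(M))$ already forces every second submodule containing $S$ to equal $S$; this settles the converse whenever some prime sits between $Ann_R(M)$ and $P$ (in particular whenever $P$ is prime, since then $(0:_MP)=S$ itself is second). But when $P$ is radical, not prime, and has no prime below it containing $Ann_R(M)$, exactly your missing lemma is what must be proved, and it is not obviously true in general (it holds vacuously when $M$ has finite length, because there $I^M_P(M)=0$ for such $P$, but that argument does not transfer to the non-Noetherian setting).

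There is a second, unflagged gap in the ``in particular'' clause. Verbatim replacement works in one direction only: if $S$ is maximal second, then $S$ is second, so the last line of Theorem \ref{t0.2}(b) turns $S=sec(I^M_{Ann_R(S)}(M))$ into $S=I^M_{Ann_R(S)}(M)$. In the reverse direction you start from $S=I^M_{Ann_R(S)}(M)$ with $S$ \emph{not yet known to be second}; to feed this into the main equivalence you need $sec(I^M_{Ann_R(S)}(M))=I^M_{Ann_R(S)}(M)$, and Theorem \ref{t0.2}(b) licenses that identity only for second $S$ --- precisely what is being proved, so the substitution is circular as stated. This direction needs its own argument (for instance, showing that for coreduced $M$ the ideal $Ann_R(I^M_Q(M))$ is radical, so that $sec$ fixes $I^M_Q(M)$). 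For comparison, the paper's own proof is a two-line appeal to $sec(M)=M$ and Theorem \ref{t0.2}, which glosses over the same two converse directions; your write-up is a more honest expansion of that route, but the places where the real work lies are exactly the places left open.
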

\begin{proof}
By \cite[Proposition 2.22]{MR3755273}, $sec(M) = M$. Now the result follows from Theorem \ref{t0.2}.
\end{proof}

\begin{note}
For a submodule $N$ of an $R$-module $M$, we define
$$
V^*(N) = \{S \in Max^s(M) : S \subseteq N \ and \ (N:_RM) \not \subseteq Ann_R(S)\}.
$$
Clearly, $V^*(N)\subseteq V^s(N)$.
\end{note}

Recall that a submodule $N$ of an $R$-module $M$ is said to be \emph {pure} if $IN=N \cap IM$ for every ideal $I$ of $R$ \cite{AF74}.
\begin{thm}\label{t0.3}
Let $L$ be a completely irreducible submodule of a comultiplication $R$-module $M$. Then $V^s(L) \subseteq V^*(L)$  if one of the following conditions hold.
\begin{itemize}
\item [(a)] $M$ is a faithful finitely generated coreduced $R$-module and $Ann_R(L)$ is a finitely generated ideal of $R$.
\item [(b)] $M$ is a finitely generated coreduced $R$-module and $M/S$ is a finitely cogenerated $R$-module for each maximal second submodule $S$ of $M$ with $S \subseteq L$.
\item [(c)] $L$ is a pure submodule of $M$.
\end{itemize}
 \end{thm}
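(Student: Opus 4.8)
The plan is to reduce all three cases to a single target and then handle the extra hypotheses separately. Since $V^*(N)\subseteq V^s(N)$ always holds, it suffices to prove the reverse inclusion: given $S\in V^s(L)$, i.e.\ a maximal second submodule $S\subseteq L$, I must produce an element of $(L:_RM)$ lying outside $Ann_R(S)$. Write $\mathfrak p=Ann_R(S)$, which is prime because $S$ is second, and set $J=Ann_R(L)$. Since $M$ is a comultiplication module, $L=(0:_MJ)$ and $S=(0:_M\mathfrak p)\neq0$, while $S\subseteq L$ forces $J\subseteq\mathfrak p$. Moreover $(L:_RM)=(Ann_R(M):_RJ)=Ann_R(JM)$, so in every case the goal becomes $Ann_R(JM)\not\subseteq\mathfrak p$ (equivalently, in the faithful case, $Ann_R(J)\not\subseteq\mathfrak p$). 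The three hypotheses are three different ways of manufacturing a witness $b\notin\mathfrak p$ with $bJM=0$.

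For (a) I would exploit that $M$ is faithful and coreduced, so $R$ is reduced by \cite[Proposition 2.15]{MR3755273}, and that $\mathfrak p$ is then a minimal prime of $R$ by Proposition \ref{l0.1}(c). By the description of $O_{\mathfrak p}$ recalled before Theorem \ref{t0.2}, a minimal prime of a reduced ring satisfies $\mathfrak p=O_{\mathfrak p}=\{a:Ann_R(a)\not\subseteq\mathfrak p\}$. Since $J\subseteq\mathfrak p$ is finitely generated, say $J=(a_1,\dots,a_n)$, each $a_i$ admits some $b_i\in Ann_R(a_i)\setminus\mathfrak p$; the product $b=b_1\cdots b_n$ then lies outside $\mathfrak p$ (as $\mathfrak p$ is prime) and kills every $a_i$, so $b\in Ann_R(J)=(L:_RM)$. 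This gives $(L:_RM)\not\subseteq\mathfrak p$. The only thing the finite generation of $J$ buys here is the passage from ``each generator is annihilated outside $\mathfrak p$'' to ``all of $J$ is annihilated by a single element outside $\mathfrak p$''.

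For (b) I would instead invoke Corollary \ref{c00.3}: since $M$ is finitely generated, comultiplication and coreduced, the maximal second submodule $S$ equals $I^M_{\mathfrak p}(M)=\bigcap\{L'\mid L'\text{ completely irreducible},\ (L':_RM)\not\subseteq\mathfrak p\}$. Each such $L'$ contains $S$, so the submodules $L'/S$ of $M/S$ intersect in $0$. The hypothesis that $M/S$ is finitely cogenerated now yields finitely many $L'_1,\dots,L'_n$ from this family with $L'_1\cap\cdots\cap L'_n=S$. Choosing $t_i\in(L'_i:_RM)\setminus\mathfrak p$ and setting $t=t_1\cdots t_n$, we get $t\notin\mathfrak p$ and $tM\subseteq\bigcap_iL'_i=S\subseteq L$, so $t\in(L:_RM)\setminus\mathfrak p$. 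The key insight is that finite cogeneration of $M/S$ is precisely what collapses the (possibly infinite) defining intersection of $I^M_{\mathfrak p}(M)$ to a finite one, after which a product of witnesses finishes exactly as in (a).

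For (c) the argument is a pure purity-plus-comultiplication computation and needs neither coreducedness nor finite generation. Applying purity of $L$ to the ideal $J=Ann_R(L)$ gives $L\cap JM=JL=J\,(0:_MJ)=0$. Writing $JM=(0:_MAnn_R(JM))$ (again by comultiplication) and using $(0:_MI)\cap(0:_MI')=(0:_M(I+I'))$, this reads $(0:_M(J+Ann_R(JM)))=0$. If we had $(L:_RM)=Ann_R(JM)\subseteq\mathfrak p$, then together with $J\subseteq\mathfrak p$ we would obtain $J+Ann_R(JM)\subseteq\mathfrak p$, whence $S=(0:_M\mathfrak p)\subseteq(0:_M(J+Ann_R(JM)))=0$, contradicting that a second submodule is nonzero; hence $(L:_RM)\not\subseteq\mathfrak p$. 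I expect the main obstacle across the theorem to be choosing, in each case, the correct ideal to feed into the hypothesis: applying purity to $J$ rather than to $(L:_RM)$ in (c), and recognizing in (b) that Corollary \ref{c00.3} is exactly what turns the problem into a finite-cogeneration statement.
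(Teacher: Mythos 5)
Your proposal is correct and takes essentially the same approach as the paper: part (a) is the paper's argument with Proposition \ref{l0.1}(e) unwound into its ingredients ($R$ reduced, $Ann_R(S)$ a minimal prime, $\mathfrak{p}=O_{\mathfrak{p}}$) plus the same product-of-witnesses use of finite generation; part (b) uses Corollary \ref{c00.3} and finite cogeneration exactly as the paper does, merely constructing the witness $t$ directly where the paper argues by contradiction; and part (c) is the paper's purity contradiction (namely $S\subseteq L\cap Ann_R(L)M=Ann_R(L)L=0$) rephrased through annihilator sums. No gaps.
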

\begin{proof}
(a) Let $S$ be a maximal second submodule of $M$ with $S \subseteq L$. Then $Ann_R(L) \subseteq Ann_R(S)$. By Proposition \ref{l0.1} (e), for each $a \in Ann_R(L)$, we have $Ann_R(a) \not \subseteq Ann_R(S)$. Now since $Ann_R(L)$ is a finitely generated ideal of $R$, $Ann_R(Ann_R(L)) \not \subseteq Ann_R(S)$. This implies that
$$
(L:_RM)=((0:_MAnn_R(L)):_RM)=(0:_RAnn_R(L)M)=
$$
$$
Ann_R(Ann_R(L)) \not \subseteq Ann_R(S).
$$
Therefore, $V^s(L) \subseteq V^*(L)$.

(b) Let $S$ be a maximal second submodule of $M$ with $S \subseteq L$. By Corollary \ref{c00.3},  $S=I^M_{Ann_R(S)}(M)$.  Now as $M/S$ is a finitely cogenerated $R$-module, there exist completely irreducible submodules $L_1, L_2, ..., L_n$ of $M$ with $(L_i:_RM) \not \subseteq Ann_R(S)$ and $S=\cap_{i=1}^nL_i$. Assume contrary that $(L:_RM) \subseteq Ann_R(S)$. Then $(S:_RM) \subseteq Ann_R(S)$ and so $(\cap_{i=1}^nL_i:_RM) \subseteq Ann_R(S)$. This implies that there exists $1\leq j\leq n$ such that $(L_j:_RM) \subseteq Ann_R(S)$, which is a contradiction.

(c) Let $S$ be a maximal second submodule of $M$ with $S \subseteq L$.  Assume contrary that $(L:_RM) \subseteq Ann_R(S)$. Then as $M$ is a comultiplication $R$-module $Ann_R(Ann_R(L)M)\subseteq Ann_R(S)$ and so $S\subseteq Ann_R(L)M$. It follows that $S\subseteq Ann_R(L)M \cap L=Ann_R(L)L=0$, which is a contradiction.
\end{proof}

\begin{prop}\label{lll0.3}
Let $M$ be an $R$-module such that $V^s(L) \subseteq V^*(L)$ for each completely irreducible submodule $L$ of $M$. If $K$ be a submodule of $M$ such that $M/K$ is a finitely cogenerated $R$-module and $S$ is a maximal second submodule of $M$ with $S \subseteq K$, then $(K:_RM) \not \subseteq Ann_R(S)$.
 \end{prop}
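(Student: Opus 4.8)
The plan is to exploit the finite cogeneration of $M/K$ to write $K$ as a \emph{finite} intersection of completely irreducible submodules, and then to leverage the primality of $Ann_R(S)$ (which holds because $S$ is second) in order to transport the standing hypothesis $V^s(L)\subseteq V^*(L)$ from the individual factors onto $(K:_RM)$ itself.

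First I would record the structural decomposition. Since every submodule of $M$ is an intersection of completely irreducible submodules and $M/K$ is finitely cogenerated, there exist completely irreducible submodules $L_1,\dots,L_n$ of $M$ with $K=\bigcap_{i=1}^n L_i$; this is exactly the reduction already used in the proof of Theorem \ref{t0.3} (b). Because $S\subseteq K\subseteq L_i$ for each $i$ and $S$ is a maximal second submodule, we have $S\in V^s(L_i)$ for every $i$. The hypothesis $V^s(L_i)\subseteq V^*(L_i)$ then forces $S\in V^*(L_i)$, which by definition of $V^*$ means $(L_i:_RM)\not\subseteq Ann_R(S)$ for every $i$.

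Next I would invoke that $S$, being second, has prime annihilator: if $ab\in Ann_R(S)$ with $a\notin Ann_R(S)$, then multiplication by $a$ is surjective, so $aS=S$ and hence $bS=b(aS)=(ab)S=0$, giving $b\in Ann_R(S)$. The key identity is then the colon computation $(K:_RM)=\bigcap_{i=1}^n(L_i:_RM)\supseteq \prod_{i=1}^n(L_i:_RM)$. Since $Ann_R(S)$ is prime and each factor $(L_i:_RM)$ escapes $Ann_R(S)$, the product $\prod_{i=1}^n(L_i:_RM)$ is not contained in $Ann_R(S)$; choosing an element of this product lying outside $Ann_R(S)$ exhibits an element of $(K:_RM)$ outside $Ann_R(S)$, and therefore $(K:_RM)\not\subseteq Ann_R(S)$.

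The only genuinely delicate point is the passage from ``each $(L_i:_RM)\not\subseteq Ann_R(S)$'' to ``$(K:_RM)\not\subseteq Ann_R(S)$'': a finite intersection of ideals each escaping a fixed ideal can in general still be swallowed by it, so this step relies essentially on the primality of $Ann_R(S)$ together with the containment of the product of the $(L_i:_RM)$ inside their intersection $(K:_RM)$. Everything else amounts to routine bookkeeping with colon submodules.
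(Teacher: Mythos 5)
Your proof is correct and follows essentially the same route as the paper: write $K=\bigcap_{i=1}^n L_i$ via finite cogeneration, apply the hypothesis $V^s(L_i)\subseteq V^*(L_i)$ to each factor, and combine. The paper simply picks elements $a_i\in (L_i:_RM)\setminus Ann_R(S)$ and notes $a_1\cdots a_n\in (K:_RM)\setminus Ann_R(S)$, which is exactly your product-of-ideals-escapes-a-prime argument carried out at the level of elements.
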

\begin{proof}
As $M/K$ is a finitely cogenerated $R$-module, there exist completely irreducible submodules $L_1, L_2, ..., L_n$ of $M$ such that $K=\cap_{i=1}^nL_i$. Now since $V^s(L_i) \subseteq V^*(L_i)$,  there exits $a_i \in (L_i:_RM) \setminus Ann_R(S)$ for each $1\leq i\leq n$. Set $a=a_1a_2...a_n$. Then we have $a \in (K:_RM)\setminus Ann_R(S)$ and so $(K:_RM) \not \subseteq Ann_R(S)$.
\end{proof}

\begin{cor}\label{cl0.3}
Let $M$ be a Noetherian coreduced comultiplication $R$-module. Then $V^*(K)= V^s(K)$ for each submodule $K$ of $M$.
 \end{cor}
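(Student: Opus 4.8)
The plan is to prove the nontrivial inclusion $V^s(K)\subseteq V^*(K)$ for every submodule $K$, since $V^*(K)\subseteq V^s(K)$ holds by definition. The two tools available are Theorem \ref{t0.3}(b) and Proposition \ref{lll0.3}, and both require that certain quotients of $M$ be finitely cogenerated. So the heart of the argument is to show that \emph{every} quotient $M/K$ is finitely cogenerated, i.e. that $M$ is Artinian. Once this is in hand, Theorem \ref{t0.3}(b) gives $V^s(L)\subseteq V^*(L)$ for each completely irreducible submodule $L$ (its hypothesis that $M/S$ be finitely cogenerated for each maximal second $S\subseteq L$ is then automatic), and Proposition \ref{lll0.3} applied to $K$ yields $(K:_RM)\not\subseteq Ann_R(S)$ for each $S\in V^s(K)$, which is precisely the statement that $S\in V^*(K)$.

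To obtain the finitely cogenerated property I would first pass to $\overline{R}=R/Ann_R(M)$, over which $M$ is faithful and still finitely generated (a Noetherian module is finitely generated). Writing $M=\overline{R}m_1+\cdots+\overline{R}m_k$, the map $\overline{R}\to M^k$ sending $\overline{r}$ to $(rm_1,\dots,rm_k)$ is injective, because its kernel is exactly the annihilator of $M$ and $M$ is faithful over $\overline{R}$. Since $M$, hence $M^k$, is a Noetherian $R$-module, its submodule $\overline{R}$ is Noetherian, so $\overline{R}$ is a Noetherian ring. The submodule lattice and the operators $(0:_M-)$ are unchanged on passing to $\overline{R}$, so I may carry out the remaining computation using the Noetherianness of $\overline{R}$.

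The key computation is then the following. Using that $M$ is a comultiplication module, every submodule satisfies $N=(0:_MAnn_R(N))$, and for any family $\{N_\lambda\}$ one has $\bigcap_\lambda N_\lambda=(0:_M\sum_\lambda Ann_R(N_\lambda))$. Suppose $\bigcap_\lambda N_\lambda=K$. Each $Ann_R(N_\lambda)$ contains $Ann_R(M)$, so $\sum_\lambda Ann_R(N_\lambda)$ is (the preimage of) an ideal of the Noetherian ring $\overline{R}$; it is therefore finitely generated and equal to $\sum_{\lambda\in F}Ann_R(N_\lambda)$ for some finite $F$. Applying $(0:_M-)$ gives $\bigcap_{\lambda\in F}N_\lambda=K$, so $M/K$ is finitely cogenerated for every $K$ (equivalently, $M$ is Artinian).

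I expect the main obstacle to be exactly this passage from the chain condition ``Noetherian'' to ``finitely cogenerated quotients'': the hypothesis is stated as an ascending chain condition on $M$, whereas Theorem \ref{t0.3} and Proposition \ref{lll0.3} are stated in terms of cogeneration, and bridging the two is where the comultiplication structure does the real work, via the annihilator correspondence together with the Noetherianness of $\overline{R}$. With this intermediate fact established, assembling the corollary from Theorem \ref{t0.3}(b) and Proposition \ref{lll0.3} is routine.
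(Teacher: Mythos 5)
Your proposal is correct and follows the same route as the paper: establish that every quotient $M/K$ is finitely cogenerated (equivalently, $M$ is Artinian), then combine Theorem \ref{t0.3} (b) with Proposition \ref{lll0.3} to obtain $V^s(K)\subseteq V^*(K)$, the reverse inclusion being trivial. The only difference is that the paper gets the Artinian property by citing \cite[Corollary 2.11]{MR2782697}, whereas you prove it from scratch, and your argument is sound: the embedding $\overline{R}\hookrightarrow M^k$ shows $\overline{R}=R/Ann_R(M)$ is a Noetherian ring, and the comultiplication correspondence $\bigcap_\lambda N_\lambda=(0:_M\sum_\lambda Ann_R(N_\lambda))$ together with finite generation of the ideal $\sum_\lambda Ann_R(N_\lambda)/Ann_R(M)$ reduces any intersection to a finite subintersection, which is exactly the finitely cogenerated condition needed.
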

\begin{proof}
By \cite[Corollary 2.11]{MR2782697}, $M$ is an Artinian $R$-module. Thus $M/K$ is finitely cogenerated for each submodule $K$ of $M$.
Now the result follows from Theorem \ref{t0.3} (b) and Proposition \ref{lll0.3}.
\end{proof}

\begin{prop}\label{pp1.5}
Let $M$ be a comultiplication $R$-module. Then we have the following.
\begin{itemize}
\item [(a)] If $R/Ann_R(M)$ is a reduced ring, then $M$ is a coreduced $R$-module.
\item [(b)] If $M$ is a faithful coreduced $R$-module, then $IM$ is a coreduced $R$-module for each ideal $I$ of $R$.
\end{itemize}
\end{prop}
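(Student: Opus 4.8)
The plan is to establish (a) directly from the comultiplication structure, and then obtain (b) by applying (a) to the submodule $IM$. For (a), let $L$ be a completely irreducible submodule of $M$ and $r\in R$ with $(L:_Mr)=M$, i.e.\ $rM\subseteq L$. Since $M$ is comultiplication I may write $L=(0:_MJ)$ for some ideal $J$ of $R$, whence $rM\subseteq(0:_MJ)$ gives $rJM=0$, that is $rJ\subseteq Ann_R(M)$. Using $(0:_Mr)=(0:_MRr)$ together with the sum formula for comultiplication modules \cite[Proposition 12]{MR3934877}, the target identity becomes
$$
L+(0:_Mr)=(0:_MJ)+(0:_MRr)=(0:_M(J\cap Rr)),
$$
so it is enough to prove $J\cap Rr\subseteq Ann_R(M)$, because then $(0:_M(J\cap Rr))=M$.

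This last inclusion is where reducedness of $\overline R=R/Ann_R(M)$ is used. Given $x\in J\cap Rr$, from $x\in J$ and $\overline r\,\overline J=0$ I get $\overline r\,\overline x=0$, while $x\in Rr$ lets me write $\overline x=\overline s\,\overline r$; hence $\overline{x}^{2}=\overline s(\overline r\,\overline x)=0$, and reducedness forces $\overline x=0$, i.e.\ $x\in Ann_R(M)$. This proves (a).

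For (b) the idea is to show that $IM$ itself satisfies the hypotheses of (a): that $IM$ is a comultiplication $R$-module and that $R/Ann_R(IM)$ is reduced; then (a) applied to $IM$ gives that $IM$ is coreduced. Since $M$ is faithful and coreduced, \cite[Proposition 2.15]{MR3755273} shows $R=R/Ann_R(M)$ is reduced. Faithfulness also gives $Ann_R(IM)=(0:_RI)$, since $r(IM)=0$ is equivalent to $rI\subseteq Ann_R(M)=0$. To see that $R/(0:_RI)$ is reduced, suppose $x^2\in(0:_RI)$; then for each $a\in I$ we have $x^2a=0$, so $(xa)^2=x^2a^2=0$ and thus $xa=0$ by reducedness of $R$, giving $xI=0$ and $x\in(0:_RI)$.

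Finally I would check that $IM$ is comultiplication: any submodule $N$ of $IM$ is a submodule of $M$, so $N=(0:_MK)$ for some ideal $K$, and since $N\subseteq IM$ this yields $N=N\cap IM=(0:_MK)\cap IM=(0:_{IM}K)$, which is the required form. Applying (a) to $IM$ then finishes the proof. I expect the two places needing the most care to be the sum formula for comultiplication modules invoked in (a) and the descent of reducedness to the quotient $R/(0:_RI)$ in (b); the remaining steps are routine verifications of inclusions and annihilator identities.
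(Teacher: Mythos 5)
There is a genuine gap in part (a). The step where you rewrite $L+(0:_Mr)=(0:_MJ)+(0:_MRr)=(0:_M(J\cap Rr))$ invokes a ``sum formula'' that comultiplication modules do not satisfy for arbitrary presenting ideals. What is true (and what \cite[Proposition 12]{MR3934877} is used for elsewhere in the paper) is the formula with full annihilators: in a comultiplication module every submodule $N$ equals $(0:_MAnn_R(N))$, hence $N_1+N_2=(0:_M Ann_R(N_1)\cap Ann_R(N_2))$. Since $Ann_R((0:_Mr))$ may be strictly larger than $Rr$, only the trivial inclusion $(0:_MJ)+(0:_MRr)\subseteq(0:_M(J\cap Rr))$ holds in general, and it is the reverse inclusion that your argument needs. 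That inclusion genuinely fails for comultiplication modules: take $R=k[[x,y]]$, let $E=E_R(k)$ be the injective hull of the residue field, and put $M=(0:_E(x+y))$; then $M$ is a comultiplication $R$-module (its submodules are the chain $(0:_Mx^n)$, $n\geq 0$, together with $M$) on which $y$ acts as $-x$, so with $J=(x)$ and $r=y$ one gets $(0:_MJ)+(0:_MRr)=(0:_Mx)$, while $(0:_M(J\cap Rr))=(0:_Mxy)=(0:_Mx^2)$ is strictly larger. Moreover, in your setting the instance of the identity you need cannot be justified independently of the proposition: once you know $J\cap Rr\subseteq Ann_R(M)$, the identity asserts exactly that $L+(0:_Mr)=M$, which is the statement being proved, so appealing to it is circular.

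The good news is that your strategy for (a) is repairable with the same squaring trick: take $J=Ann_R(L)$ and replace $Rr$ by $C=Ann_R((0:_Mr))$. The legitimate sum formula then gives $L+(0:_Mr)=(0:_M(J\cap C))$, and if $x\in J\cap C$, then $rM\subseteq L$ and $xL=0$ give $xrM=0$, i.e.\ $xM\subseteq(0:_Mr)$, whence $x^2M\subseteq x(0:_Mr)=0$ and reducedness of $R/Ann_R(M)$ yields $x\in Ann_R(M)$; thus $J\cap C\subseteq Ann_R(M)$ and $L+(0:_Mr)=(0:_MJ\cap C)=M$. (The paper sidesteps all of this by instead verifying that $r^2M\subseteq L$ implies $rM\subseteq L$ and citing the characterization of coreduced modules in \cite[Theorem 2.13]{MR3755273}.) Your part (b) is correct and is essentially the paper's proof: both reduce to part (a) applied to $IM$, using faithfulness to see $Ann_R(IM)=Ann_R(I)$; you simply prove directly the two facts the paper cites, namely that $R/Ann_R(I)$ is reduced when $R$ is (\cite[Lemma 2.6]{MR194880}) and that submodules of comultiplication modules are comultiplication (\cite[Proposition 6]{MR3934877}).
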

\begin{proof}
(a) Let $r^2M\subseteq L$ for some $r \in R$ and a completely irreducible submodule $L$ of $M$. Then $r^2Ann_R(L)M=0$ and so for each $a \in Ann_R(L)$ we have  $r^2aM=0$. This implies that $(ra)^2\in Ann_R(M)$. Now as $R/Ann_R(M)$ is a reduced ring, we have $ra\in Ann_R(M)$. Hence  $rAnn_R(L)M=0$. It follows that $rM\subseteq L$ since $M$ is a comultiplication $R$-module. Thus by \cite[Theorem 2.13]{MR3755273},
$M$ is a coreduced $R$-module.

(b)
Let $I$ be an ideal of $R$. Since $M$ is a coreduced $R$-module, $R$ is a reduced ring by \cite[Proposition 2.15]{MR3755273}.
Thus $R/Ann_R(I)$ is a reduced ring by \cite[Lemma 2.6]{MR194880}. So $R/Ann_R(IM)$ is a reduced ring.  By \cite[Proposition 6]{MR3934877}, $IM$ is a comultiplication $R$-module. So,  by part (a), $IM$ is a coreduced $R$-module.
\end{proof}

Let $M$ be an $R$-module.
If $N$ is a submodule of $M$, then $sec(N)$ is the sum of the maximal second submodules of $N$ \cite[Proposition 2.4]{MR3091647}.
Clearly, $\mathfrak{S}_N\subseteq sec(N)$ for each submodule $N$ of $M$. In \cite[Proposition 2.22]{MR3755273}, it is shown that $M$ is a coreduced module if $sec(M) = M$ and the converse holds when $M$ is a finitely generated comultiplication $R$-module.
\begin{thm}\label{tt0.1}
Let $M$ be a finitely generated faithful coreduced comultiplication $R$-module. Then $\mathfrak{S}_{IM}= sec(IM)=IM$ for each ideal $I$ of $R$.
\end{thm}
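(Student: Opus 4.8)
The plan is to reduce the three-fold equality $\mathfrak{S}_{IM}=sec(IM)=IM$ to the single inclusion $IM \subseteq \mathfrak{S}_{IM}$, since the remaining inclusions are formal. Indeed, every maximal second submodule of $M$ contained in $IM$ is in particular a second submodule of $M$ contained in $IM$, so $\mathfrak{S}_{IM}\subseteq sec(IM)$, while $sec(IM)\subseteq IM$ holds by the very definition of the second radical. Hence once $IM\subseteq \mathfrak{S}_{IM}$ is established, the chain $IM\subseteq \mathfrak{S}_{IM}\subseteq sec(IM)\subseteq IM$ closes up and all three submodules coincide.

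The crux is therefore to exhibit $IM$ itself as a sum of maximal second submodules of $M$. Since $M$ is a finitely generated coreduced comultiplication module, the converse direction of \cite[Proposition 2.22]{MR3755273} gives $sec(M)=M$, and by \cite[Proposition 2.4]{MR3091647} the second radical $sec(M)$ is the sum of the maximal second submodules of $M$; that is, $M=\mathfrak{S}_M=\sum_{S\in Max^s(M)}S$. Multiplying by $I$ and distributing, I would write
$$
IM = I\,\mathfrak{S}_M = \sum_{S\in Max^s(M)} IS .
$$
The key elementary point is then the behaviour of a second submodule under multiplication by an ideal: for a fixed $S\in Max^s(M)$, if $IS\neq 0$ then some $a\in I$ satisfies $aS\neq 0$, whence $aS=S$ and so $S=aS\subseteq IS\subseteq S$, forcing $IS=S$; thus $IS\in\{0,S\}$ for every $S$. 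Consequently
$$
IM=\sum_{\{S\in Max^s(M)\,:\, IS=S\}} S ,
$$
and each summand $S$ on the right is a maximal second submodule of $M$ with $S=IS\subseteq IM$, so it contributes to $\mathfrak{S}_{IM}$. This yields $IM\subseteq \mathfrak{S}_{IM}$ and finishes the argument.

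I expect the main conceptual step — rather than a genuine obstacle — to be the identity $M=\mathfrak{S}_M$, i.e. recognizing that the hypotheses force $M$ to be the sum of its maximal second submodules; once this is in hand the rest is purely formal, resting only on $IS\in\{0,S\}$. The one point that needs care is the distinction between $\mathfrak{S}_N$ (maximal second submodules of $M$ lying inside $N$) and $sec(N)$ (the second radical of $N$), which can genuinely differ for a general submodule $N$; the argument succeeds precisely because for $N=IM$ one can establish $IM\subseteq \mathfrak{S}_{IM}$ directly, thereby forcing the two notions to agree. A less efficient route through Remark \ref{r2.1}, testing the desired inclusion against each completely irreducible submodule $L$ containing $\mathfrak{S}_{IM}$, would also work, but it would obscure the clean decomposition above.
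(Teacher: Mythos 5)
Your proof is correct, and it takes a genuinely different, more economical route than the paper's. The paper proves the two nontrivial inclusions separately: it shows $sec(IM)\subseteq\mathfrak{S}_{IM}$ by proving that every maximal second submodule of $IM$ is already a maximal second submodule of $M$ --- this is where faithfulness enters (to get $Ann_R(IM)=Ann_R(I)$), along with the reducedness of $R/Ann_R(I)$ from the proof of Proposition \ref{pp1.5} (b), parts (c) and (d) of Proposition \ref{l0.1}, and a minimal-prime transfer result cited from \cite{F402} --- and it shows $IM\subseteq sec(IM)$ from $sec(M)=M$ by a contradiction argument tested against completely irreducible submodules via Remark \ref{r2.1}. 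You instead reduce everything to the single inclusion $IM\subseteq\mathfrak{S}_{IM}$, which you obtain from $M=\mathfrak{S}_M$ (combining $sec(M)=M$ from \cite[Proposition 2.22]{MR3755273} with \cite[Proposition 2.4]{MR3091647}, both quoted in the paper immediately before the theorem) together with the elementary dichotomy $IS\in\{0,S\}$ for a second submodule $S$ and distributivity of $I$ over the sum $\sum_{S\in Max^s(M)}S$. Both arguments share the same kernel --- if $S$ is second and $IS\neq 0$, then $IS=S\subseteq IM$ --- but you apply it directly, whereas the paper filters it through the completely-irreducible-submodule test of Remark \ref{r2.1}. Your route is shorter, avoids the minimal-prime machinery and the external citation, and, notably, never uses the faithfulness hypothesis, so it in fact proves the statement for every finitely generated coreduced comultiplication module. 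What the paper's route buys in exchange is a structural byproduct your decomposition does not recover: every maximal second submodule of $IM$ is itself a maximal second submodule of $M$, so $sec(IM)$ and $\mathfrak{S}_{IM}$ agree summand by summand, which is of independent interest.
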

\begin{proof}
Clearly $\mathfrak{S}_{IM}\subseteq sec(IM)$. For the reverse inclusion, let $S$ be a maximal second submodule of $IM$. It is enough to show that $S$ is a maximal second submodule of $M$.
By the proof of Proposition \ref{pp1.5} (b), $R/Ann_R(I)$ is a reduced ring.
By Proposition \ref{l0.1} (c), $Ann_R(S)$ is a minimal prime ideal over $Ann_R(IM)$. Since $M$ is faithful, we have $Ann_R(IM)=Ann_R(I)$. Thus $Ann_R(S)/Ann_R(I)$ is a minimal prime ideal of $R/Ann_R(I)$. Hence  by \cite[Proposition 2.10 (b)]{F402}, $Ann_R(S)$ is a minimal prime ideal of $R$. Thus $S$ is a maximal second submodule of $M$ by Proposition \ref{l0.1} (d) and we are through.  By \cite[Proposition 2.22]{MR3755273}, $sec(M)=M$. Hence, $Isec(M)=IM$. We show that $Isec(M) \subseteq sec(IM)$. So let $L$ be a completely irreducible submodule of $M$ with $sec(IM)\subseteq L$. Assume contrary that $Isec(M)\not \subseteq L$. Then there exists a maximal second submodule $S$ of $M$ such that $IS\not \subseteq L$. Since $S$ is second, $IS=S$ and so $S\not \subseteq L$. On the other
hand, $S=IS \subseteq IM$ implies that $S\subseteq sec(IM) \subseteq L$. This contradiction shows that $Isec(M) \subseteq L$. Thus by Remark \ref{r2.1}, $Isec(M) \subseteq sec(IM)$.
Therefore, $IM \subseteq sec(IM)$. Now, we are through since the reverse inclusion is clear.
\end{proof}

An $R$-module $M$ is said to be a \emph{multiplication module} if for every submodule $N$ of $M$ there exists an ideal $I$ of $R$ such that $N=IM$ \cite{Ba81}.
\begin{cor}\label{ccc1.13}
Let $M$ be a faithful finitely generated coreduced comultiplication and multiplication $R$-module. Then for each submodule $N$ of $M$, we have  $\mathfrak{S}_{N}= sec(N)=N$.
\end{cor}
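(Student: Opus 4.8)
The plan is to recognize this as an immediate specialization of Theorem \ref{tt0.1}, with the multiplication hypothesis serving as the sole new ingredient. The point of Theorem \ref{tt0.1} is that it computes $\mathfrak{S}_{IM}=sec(IM)=IM$ only for submodules of the special form $IM$, where $I$ ranges over ideals of $R$. Since $M$ is assumed to be a \emph{multiplication} module, every submodule $N$ of $M$ is in fact of this form: by definition there exists an ideal $I$ of $R$ with $N=IM$. This is exactly the bridge needed to upgrade a statement about submodules of the shape $IM$ into a statement about arbitrary submodules.

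So the proof I would write is essentially one line of reduction followed by a citation. First I would note that all the hypotheses of Theorem \ref{tt0.1} are present verbatim in the present statement: $M$ is faithful, finitely generated, coreduced, and a comultiplication module. Then, given an arbitrary submodule $N$, I would invoke the multiplication property to fix an ideal $I$ with $N=IM$ and apply Theorem \ref{tt0.1} to this $I$, obtaining
$$
\mathfrak{S}_{N}=\mathfrak{S}_{IM}=sec(IM)=IM=N.
$$
There is no genuine obstacle here; the only thing worth checking is that the extra hypothesis (being a multiplication module) is used precisely to guarantee that the family $\{IM : I \trianglelefteq R\}$ exhausts all submodules, and that nothing in Theorem \ref{tt0.1} depends on the choice of $I$ representing $N$. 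Since the conclusion $\mathfrak{S}_{IM}=sec(IM)=IM$ is stated intrinsically in terms of the submodule $IM$ and not its presenting ideal, the equality is well defined regardless of which $I$ one picks, so the argument goes through for every submodule $N$ at once.
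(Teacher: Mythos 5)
Your proof is correct and is exactly the intended argument: the paper states this corollary without proof immediately after defining multiplication modules, precisely because the reduction you describe (write $N=IM$ via the multiplication hypothesis, then apply Theorem \ref{tt0.1}) is the whole content. Your observation that the conclusion depends only on the submodule $IM$ and not on the presenting ideal $I$ is a nice point of care, though not strictly needed.
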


\begin{ex}\label{e3.9}
Consider the $\Bbb Z_n$-module $M=\Bbb Z_n$, where $n$ is square free.
We know that $M$ is a faithful finitely generated coreduced comultiplication and multiplication $\Bbb Z_n$-module. Thus
for each submodule $N$ of $M$, we have  $\mathfrak{S}_{N}= sec(N)=N$.
\end{ex}

\begin{cor}\label{c1.13}
Let $M$ be a faithful finitely generated coreduced comultiplication $R$-module. Then for each $a \in R$, $Ann_R(aM)M=\mathfrak{S}_{(0:_Ma)}$.
\end{cor}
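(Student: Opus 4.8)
The plan is to chain together three facts established above, reducing the claim to Theorem \ref{tt0.1}. First I would exploit faithfulness to simplify the left-hand side. Since $M$ is faithful, $Ann_R(M)=0$, so for $r\in R$ one has $r\in Ann_R(aM)$ exactly when $ra\in Ann_R(M)=0$, i.e. $Ann_R(aM)=Ann_R(a)$. Consequently $Ann_R(aM)M=Ann_R(a)M$, and the problem becomes showing $Ann_R(a)M=\mathfrak{S}_{(0:_Ma)}$.

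Next I would bring in Theorem \ref{t1.5} (a), which (using again that $M$ is faithful) gives $V^s((0:_Ma))=V^s(Ann_R(a)M)$ for each $a\in R$. Since $\mathfrak{S}_K$ is by definition the sum of the maximal second submodules in $V^s(K)$, equality of these index sets yields at once $\mathfrak{S}_{(0:_Ma)}=\mathfrak{S}_{Ann_R(a)M}$. This replaces the colon module $(0:_Ma)$ by the submodule $Ann_R(a)M$, which has the form $IM$ for the ideal $I=Ann_R(a)$, precisely the shape to which Theorem \ref{tt0.1} applies.

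Finally I would apply Theorem \ref{tt0.1} with $I=Ann_R(a)$, which asserts $\mathfrak{S}_{IM}=sec(IM)=IM$; in particular $\mathfrak{S}_{Ann_R(a)M}=Ann_R(a)M$. Combining the three steps gives
\[
Ann_R(aM)M=Ann_R(a)M=\mathfrak{S}_{Ann_R(a)M}=\mathfrak{S}_{(0:_Ma)},
\]
as required. The only real content is bookkeeping, since each step is an instance of a result already proved, so I do not expect a genuine obstacle. The point most worth double-checking is the identification $Ann_R(aM)=Ann_R(a)$: it is the one place where faithfulness enters the left-hand side directly, and it is also what licenses the use of the faithful hypothesis in Theorem \ref{t1.5} (a) for the same element $a$.
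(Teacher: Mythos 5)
Your proposal is correct and follows the paper's own route: the paper likewise deduces the corollary from Theorem \ref{tt0.1} (giving $\mathfrak{S}_{IM}=IM$ with $I=Ann_R(a)$) combined with Theorem \ref{t1.5}~(a) (giving $V^s((0:_Ma))=V^s(Ann_R(a)M)$, hence equality of the corresponding sums $\mathfrak{S}$), with the identification $Ann_R(aM)=Ann_R(a)$ via faithfulness left implicit. You have simply made explicit the bookkeeping the paper compresses into two sentences.
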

\begin{proof}
By Theorem \ref{tt0.1}, $IM=\mathfrak{S}_{IM}$ for each ideal $I$ of $R$. Now the result follows from Theorem \ref{t1.5} (a).
\end{proof}

The following example shows that the condition "$M$ is a finitely generated $R$-module" in Corollary \ref{c1.13} is necessary.
\begin{ex}\label{e1.13}
For each prime number $p$ the $\Bbb Z$-module $\Bbb Z_{p^\infty}$ is a faithful coreduced comultiplication $\Bbb Z$-module. But
the $\Bbb Z$-module $\Bbb Z_{p^\infty}$ is not finitely generated.
For each positive integer $n$,
$$
Ann_{\Bbb Z}(p^n\Bbb Z_{p^\infty})\Bbb Z_{p^\infty}=0\not =\langle1/p+ \Bbb Z\rangle=\mathfrak{S}_{(0:_{\Bbb Z_{p^\infty}}p^n)}.
$$
\end{ex}

\begin{thm}\label{t1.45}
Let $M$ be a finitely generated coreduced comultiplication $R$-module. Then for each submodule $K$ of $M$ we have the following.
\begin{itemize}
\item [(a)] $V^s((0:_M(K:_RM))=Max^s(M) \setminus V^*(K)$.
\item [(b)] $V^*(K)=V^*((K:_RM)M)$.
\end{itemize}
\end{thm}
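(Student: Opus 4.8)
The plan is to translate both equalities into purely ideal-theoretic statements about $Ann_R(S)$, using only that a second submodule $S$ satisfies $rS=S$ or $rS=0$ for every $r\in R$. The preliminary step I would isolate is the observation that the containment clause in the definition of $V^*$ is automatic: for any submodule $N$ of $M$ one has
\[ V^*(N)=\{S\in Max^s(M): (N:_RM)\not\subseteq Ann_R(S)\}. \]
Indeed, if $(N:_RM)\not\subseteq Ann_R(S)$, pick $r\in(N:_RM)\setminus Ann_R(S)$; then $rS\neq0$, so $rS=S$ because $S$ is second, and hence $S=rS\subseteq rM\subseteq N$, which gives $S\subseteq N$ for free. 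This reduces membership in $V^*(N)$ to the single non-containment of $(N:_RM)$ in $Ann_R(S)$.

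For part (a) I would note that $S\in V^s((0:_M(K:_RM)))$ means $S\subseteq(0:_M(K:_RM))$, i.e.\ $(K:_RM)S=0$, i.e.\ $(K:_RM)\subseteq Ann_R(S)$. By the simplified description above, a maximal second submodule $S$ lies in $Max^s(M)\setminus V^*(K)$ precisely when $(K:_RM)\subseteq Ann_R(S)$ as well. Comparing the two conditions yields $V^s((0:_M(K:_RM)))=Max^s(M)\setminus V^*(K)$ at once.

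For part (b), writing $J=(K:_RM)$, the key identity is that the residual operator is idempotent here, namely $(JM:_RM)=J$. The inclusion $J\subseteq(JM:_RM)$ is clear, and for the reverse I would use that $JM\subseteq K$ by the definition of $J$, so any $r$ with $rM\subseteq JM$ satisfies $rM\subseteq K$, forcing $r\in(K:_RM)=J$. Since $(JM:_RM)=(K:_RM)$, the simplified descriptions of $V^*(K)$ and $V^*(JM)$ are literally the same set, and $V^*(K)=V^*(JM)$ follows.

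The argument is almost entirely bookkeeping once the two structural facts are in place; the only genuine insight, and hence the step I would flag as the crux, is recognizing that the clause $S\subseteq N$ in $V^*(N)$ is redundant and that $(JM:_RM)=J$. Notably, neither of these uses the coreduced or comultiplication hypotheses beyond the second-submodule dichotomy, so I would not expect to invoke the deeper results of Section~2 (such as Proposition~\ref{l0.1} or Theorem~\ref{tt0.1}) in this particular proof.
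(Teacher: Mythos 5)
Your proof is correct, and for part (b) it takes a genuinely different route from the paper. For part (a) the content is essentially the same: the paper's case analysis (if $S \not\subseteq K$, then every $b \in (K:_RM)$ must kill $S$ because $S$ is second) is exactly the contrapositive of your redundancy lemma, so there you have merely isolated the key point as a preliminary observation. The real divergence is in part (b). The paper proves the inclusion $V^*(K) \subseteq V^*((K:_RM)M)$ by taking $a \in (K:_RM)\setminus Ann_R(S)$ and showing $S = aS \subseteq L$ for every completely irreducible submodule $L$ containing $(K:_RM)M$, then invoking Remark \ref{r2.1} to conclude $S \subseteq (K:_RM)M$. You instead prove the idempotence identity $((K:_RM)M:_RM) = (K:_RM)$ --- your argument for it is correct and purely formal --- and observe that, after the redundancy lemma, both $V^*(K)$ and $V^*((K:_RM)M)$ are cut out by the single condition $(K:_RM) \not\subseteq Ann_R(S)$. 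Your route buys two things: it avoids the completely-irreducible-submodule machinery entirely, and it actually repairs a soft spot in the paper's own proof, whose opening claim that ``clearly'' $V^*((K:_RM)M)\subseteq V^*(K)$ is not immediate from the definitions --- the containment $(K:_RM) \subseteq ((K:_RM)M:_RM)$ points the wrong way for transferring the annihilator condition, and one needs precisely your identity (or an equivalent argument) to justify it. You are also right that neither proof uses the finitely generated, coreduced, or comultiplication hypotheses: both arguments work for an arbitrary $R$-module, the hypotheses being retained presumably for uniformity with the surrounding results (such as Corollary \ref{c1.143}) that do rely on them.
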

\begin{proof}
(a)
If $S\in V^*(K)$, then $(K:_RM) \not\subseteq Ann_R(S)$. Thus $S \not\subseteq (0:_M(K:_RM)$. Therefore,  $V^s((0:_M(K:_RM))\cap V^*(K)=\emptyset$. On the other
hand, if $S\in Max^s(M) \setminus V^*(K)$, then $S\not \subseteq K$ or $(K:_RM)\subseteq Ann_R(S)$. If $(K:_RM)\subseteq Ann_R(S)$, then $S \subseteq (0:_M(K:_RM))$. Thus $S \in V^s((0:_M(K:_RM))$. If $S\not \subseteq K$,
then for any $b \in (K:_RM)$, we have $bS\subseteq K$. Since
$S\not \subseteq K$ and $S$ is second, $b\in Ann_R(S)$. This implies that $S \in V^s((0:_M(K:_RM))$.  Thus $V^s((0:_M(K:_RM))=Max^s(M) \setminus V^*(K)$.

(b) Clearly,  $V^*((K:_RM)M)\subseteq  V^*(K)$. Now let
$S$ be a maximal second submodule of $M$ contained in $K$ and $a \in (K:_RM) \setminus Ann_R(S)$. Thus for any completely irreducible submodule $L$ of $M$ with  $(K:_RM)M \subseteq L$, we have $0\not=aS\subseteq L$. It follows that $S \subseteq L$. Therefore, $S\subseteq (K:_RM)M$. As $(K:_RM)\not \subseteq Ann_R(S)$, we have  $((K:_RM)M:_RM)\not \subseteq Ann_R(S)$. Therefore,  $V^*(K)\subseteq V^*((K:_RM)M)$.
\end{proof}

\begin{cor}\label{c1.143}
Let $M$ be a Noetherian coreduced comultiplication $R$-module. Then for each submodule $K$ of $M$, $(K:_RM)M=\mathfrak{S}_{K}$.
\end{cor}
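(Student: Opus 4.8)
The plan is to prove the identity in two stages: first identify the two sums $\mathfrak{S}_K$ and $\mathfrak{S}_{(K:_RM)M}$ by showing they range over the same family of maximal second submodules, and then establish $(K:_RM)M = \mathfrak{S}_{(K:_RM)M}$ directly. Chaining the two equalities then yields $(K:_RM)M = \mathfrak{S}_K$.

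For the first stage, since $M$ is Noetherian coreduced comultiplication, Corollary \ref{cl0.3} applies to every submodule, giving $V^s(K) = V^*(K)$ and $V^s((K:_RM)M) = V^*((K:_RM)M)$. On the other hand, Theorem \ref{t1.45}(b) provides $V^*(K) = V^*((K:_RM)M)$. Concatenating these three equalities gives $V^s(K) = V^s((K:_RM)M)$, that is, the maximal second submodules of $M$ contained in $K$ are exactly those contained in $(K:_RM)M$. Since $\mathfrak{S}_N$ is by definition the sum over $V^s(N)$, this forces $\mathfrak{S}_K = \mathfrak{S}_{(K:_RM)M}$.

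For the second stage, write $I = (K:_RM)$. Because $M$ is finitely generated comultiplication and coreduced, $sec(M) = M$ (Proposition 2.22 of \cite{MR3755273}), and since $sec$ of a module equals the sum of its maximal second submodules (Proposition 2.4 of \cite{MR3091647}), we get $M = \mathfrak{S}_M$. Distributing $I$ across this sum gives $IM = I\mathfrak{S}_M = \sum_{S \in Max^s(M)} IS$. For each maximal second submodule $S$, since $S$ is second we have $IS = S$ when $I \not\subseteq Ann_R(S)$ and $IS = 0$ when $I \subseteq Ann_R(S)$; in the first case $S = IS \subseteq IM$ exhibits $S$ as a maximal second submodule of $M$ lying inside $IM$, so $IS \subseteq \mathfrak{S}_{IM}$, while in the second case the contribution is $0$. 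Hence $IM \subseteq \mathfrak{S}_{IM}$, and the reverse inclusion is immediate from the definition of $\mathfrak{S}_{IM}$, so $IM = \mathfrak{S}_{IM}$.

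The step I expect to demand the most care is this second stage, because Corollary \ref{c1.143} drops the faithfulness hypothesis present in the closely analogous Theorem \ref{tt0.1}, so that theorem cannot simply be quoted. The resolution is to observe that distributing $I$ over $\mathfrak{S}_M = M$ deposits each surviving summand $S$ directly as a maximal second submodule of $M$ contained in $IM$, i.e. inside $\mathfrak{S}_{IM}$ itself rather than merely inside $sec(IM)$; this sidesteps entirely the comparison between maximal second submodules of $IM$ and of $M$ for which faithfulness was invoked before. Combining the two stages then gives $(K:_RM)M = IM = \mathfrak{S}_{IM} = \mathfrak{S}_{(K:_RM)M} = \mathfrak{S}_K$, as required.
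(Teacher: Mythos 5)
Your proof is correct, and its first stage is exactly the paper's: both you and the paper chain Corollary \ref{cl0.3} with Theorem \ref{t1.45} (b) to get $V^s(K)=V^*(K)=V^*((K:_RM)M)=V^s((K:_RM)M)$, hence $\mathfrak{S}_K=\mathfrak{S}_{(K:_RM)M}$. Where you genuinely diverge is the second stage. The paper disposes of $(K:_RM)M=\mathfrak{S}_{(K:_RM)M}$ by citing Theorem \ref{tt0.1}; but that theorem assumes $M$ is faithful, a hypothesis absent from this corollary and not implied by it (the $\Bbb Z$-module $\Bbb Z_p$, $p$ prime, is Noetherian, coreduced and comultiplication but not faithful), so the citation is not literally licensed and must be repaired, e.g.\ by passing to $R/Ann_R(M)$, over which $M$ is faithful and all the relevant constructions are unchanged. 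You spotted exactly this mismatch and instead proved $IM=\mathfrak{S}_{IM}$ directly: Noetherian coreduced comultiplication gives $sec(M)=M$ by \cite[Proposition 2.22]{MR3755273}, $sec(M)=\mathfrak{S}_M$ by \cite[Proposition 2.4]{MR3091647}, and then $IM=I\mathfrak{S}_M=\sum_S IS$, where each summand is $0$ (if $I\subseteq Ann_R(S)$) or equals $S=IS\subseteq IM$ (if $I\not\subseteq Ann_R(S)$) and hence lies in $\mathfrak{S}_{IM}$; the reverse inclusion is trivial. This is sound, and it avoids faithfulness precisely because it lands directly in $\mathfrak{S}_{IM}$ rather than detouring through $sec(IM)$ --- the step of Theorem \ref{tt0.1} where faithfulness is actually used is in showing that maximal second submodules of $IM$ are maximal second submodules of $M$. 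Your route also replaces the paper's completely-irreducible-submodule argument for $Isec(M)\subseteq sec(IM)$ with a shorter computation. In short: same skeleton, but your second stage yields a proof valid verbatim under the stated hypotheses, whereas the paper's proof as written needs an implicit faithfulness repair.
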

\begin{proof}
By Theorem \ref{tt0.1}, $(K:_RM)M=\mathfrak{S}_{(K:_RM)M}$. By \cite[Corollary 2.11]{MR2782697}, $M$ is an Artinian $R$-module.  Thus $M/K$ is a finitely cogenerated.
Now the result follows from Corollary \ref{cl0.3} and Theorem \ref{t1.45} (b).
\end{proof}

\begin{thm}\label{t99.6}
Let $N$ be a submodule of a Noetherian coreduced comultiplication $R$-module $M$. Then for submodules $K$ and $H$ of $M$ the following are equivalent:
\begin{itemize}
\item [(a)]  $\mathfrak{S}_K=\mathfrak{S}_H$ and $N \subseteq  K$ imply that $N  \subseteq H$;
\item [(b)] $V^s(K)=V^s(H)$ and $N \subseteq K$ imply that $N  \subseteq H$;
\item [(c)]  $(K:_RM)=(H:_RM)$ and $N  \subseteq  K$ imply that $N  \subseteq H$;
\item [(d)] For submodule $K$ of $M$, $N \subseteq K$ implies that $N \subseteq (K:_RM)M$.
\end{itemize}
\end{thm}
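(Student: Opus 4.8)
The plan is to prove the four statements equivalent around the cycle $(a)\Rightarrow(b)\Rightarrow(c)\Rightarrow(d)\Rightarrow(a)$, the only substantive input being Corollary \ref{c1.143}, which gives $(K:_RM)M=\mathfrak{S}_K$ for every submodule $K$. First I would record the elementary comparisons between the three hypotheses appearing in $(a)$, $(b)$, $(c)$. Since $\mathfrak{S}_K=\sum_{S\in V^s(K)}S$ and $\mathfrak{S}_K\subseteq K$, the conditions $V^s(K)=V^s(H)$ and $\mathfrak{S}_K=\mathfrak{S}_H$ are equivalent: equal families of summands give equal sums, and conversely if $\mathfrak{S}_K=\mathfrak{S}_H$ and $S\in V^s(K)$, then $S\subseteq\mathfrak{S}_K=\mathfrak{S}_H\subseteq H$, so $S\in V^s(H)$, and symmetrically. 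Moreover $(K:_RM)=(H:_RM)$ yields $(K:_RM)M=(H:_RM)M$, hence $\mathfrak{S}_K=\mathfrak{S}_H$ by Corollary \ref{c1.143}. Thus the hypothesis of $(c)$ implies that of $(b)$, and those of $(a)$ and $(b)$ coincide, so $(a)\Rightarrow(b)$ and $(b)\Rightarrow(c)$ are purely formal: in each case one derives the hypothesis of the assumed statement from that of the target statement, and then applies it together with $N\subseteq K$ to conclude $N\subseteq H$.

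The one step carrying an actual idea is $(c)\Rightarrow(d)$. Given a submodule $K$ with $N\subseteq K$, I would set $H=(K:_RM)M$ and use the formal identity $((K:_RM)M:_RM)=(K:_RM)$. Here $\supseteq$ is automatic, while $\subseteq$ holds because $(K:_RM)M\subseteq K$, so $rM\subseteq(K:_RM)M$ forces $rM\subseteq K$, i.e. $r\in(K:_RM)$. Consequently $(K:_RM)=(H:_RM)$, and applying $(c)$ to the pair $K,H$ gives $N\subseteq H=(K:_RM)M$, which is precisely $(d)$. To close the cycle, $(d)\Rightarrow(a)$: assuming $\mathfrak{S}_K=\mathfrak{S}_H$ and $N\subseteq K$, statement $(d)$ gives $N\subseteq(K:_RM)M$, and by Corollary \ref{c1.143} together with the hypothesis this equals $\mathfrak{S}_K=\mathfrak{S}_H\subseteq H$, so $N\subseteq H$.

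I do not anticipate a real obstacle, since once Corollary \ref{c1.143} is in hand the argument is bookkeeping plus the trivial residual identity $((K:_RM)M:_RM)=(K:_RM)$. The two places needing care are getting the logical direction right---a \emph{weaker} hypothesis produces a \emph{stronger} universally quantified implication, so the hypothesis implications must run opposite to the cycle of the statements---and checking the equivalence $V^s(K)=V^s(H)\Leftrightarrow\mathfrak{S}_K=\mathfrak{S}_H$ in both directions rather than taking it for granted. Neither is deep, so the write-up should be brief.
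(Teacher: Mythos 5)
Your proof is correct, and it deviates from the paper's in one substantive way that is worth noting. The paper proves the triangle $(a)\Rightarrow(b)\Rightarrow(c)\Rightarrow(a)$ and then attaches $(c)\Leftrightarrow(d)$ as a separate appendage; for the step $(b)\Rightarrow(c)$ it first invokes Corollary \ref{cl0.3} to identify $V^s$ with $V^*$ and then Theorem \ref{t1.45} (b) to conclude $V^s(K)=V^s((K:_RM)M)=V^s((H:_RM)M)=V^s(H)$. You bypass that machinery entirely: your observation that $V^s(K)=V^s(H)$ and $\mathfrak{S}_K=\mathfrak{S}_H$ are equivalent for completely general reasons (equal families of summands give equal sums; conversely $S\in V^s(K)$ gives $S\subseteq\mathfrak{S}_K=\mathfrak{S}_H\subseteq H$, so $S\in V^s(H)$) makes $(a)$ and $(b)$ literally the same statement, and then Corollary \ref{c1.143} alone converts the hypothesis of $(c)$ into that of $(b)$. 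The net effect is that your argument has a single external input, Corollary \ref{c1.143}, plus the trivial residual identity $((K:_RM)M:_RM)=(K:_RM)$, whereas the paper additionally leans on Theorem \ref{t1.45} (b) and Corollary \ref{cl0.3}; in exchange, the paper's route makes visible the connection between this theorem and the $V^*$-apparatus, which your proof never touches. Your choice to close the cycle as $(a)\Rightarrow(b)\Rightarrow(c)\Rightarrow(d)\Rightarrow(a)$ rather than the paper's triangle-plus-biconditional is a cosmetic difference: your $(c)\Rightarrow(d)$ is identical to the paper's, and your $(d)\Rightarrow(a)$ simply composes the paper's $(d)\Rightarrow(c)$ and $(c)\Rightarrow(a)$ into one line via $N\subseteq(K:_RM)M=\mathfrak{S}_K=\mathfrak{S}_H\subseteq H$. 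Both write-ups are of comparable length; yours is the more self-contained, the paper's the more integrated with its surrounding results.
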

\begin{proof}
First note that by Corollary \ref{cl0.3}, $V^s(K)=V^*(K)$ for each submodule $K$ of $M$.

$(a)\Rightarrow (b)$
Let  $V^s(K)=V^s(H)$ and $N \subseteq  K$. Then  $\mathfrak{S}_K=\mathfrak{S}_H$. Thus by part (a), $N \subseteq  H$.

$(b)\Rightarrow (c)$
Let $(K:_RM)=(H:_RM)$ and $N  \subseteq  K$. Then $(K:_RM)M=(H:_RM)M$.  So, by Theorem \ref{t1.45} (b), $V^s(K)=V^s(H)$. Hence $N \subseteq  H$ by part (b).

$(c)\Rightarrow (a)$
Let  $\mathfrak{S}_{K}=\mathfrak{S}_{H}$ and $N \subseteq  K$. Then $(K:_RM)M=(H:_RM)M$ by Corollary \ref{c1.143}. Thus $$
(K:_RM)=((K:_RM)M:_RM)=((H:_RM)M:_RM)=(H:_RM).
$$
So, $N \subseteq  H$  by part (c).

$(c)\Rightarrow (d)$
This follows from the fact that $(K:_RM)=((K:_RM)M:_RM)$.

$(d)\Rightarrow (c)$
Let for submodules $K$ and $H$ of $M$, $(K:_RM)=(H:_RM)$ and $N \subseteq K$. Then $(K:_RM)M=(H:_RM)M$. By part (d), $N\subseteq (K:_RM)M$. Thus $N\subseteq (H:_RM)M\subseteq H$.
\end{proof}

\end{document}